\title{Bogomolov-Sommese type vanishing for globally $F$-regular threefolds}
\author{Tatsuro Kawakami}
\address{Graduate School of Mathematical Sciences, University of Tokyo, 3-8-1 Komaba, Meguro-ku, Tokyo 153-8914, Japan}
\email{kawakami@ms.u-tokyo.ac.jp}
\def\phi{\varphi}
\def\epsilon{\varepsilon}
\def\log{\operatorname{log}}
\def\Hom{\operatorname{Hom}}
\def\Supp{\operatorname{Supp}}
\def\codim{\operatorname{codim}}
\def\Pic{\operatorname{Pic}}
\def\Im{\operatorname{Im}}
\def\Ker{\operatorname{Ker}}
\def\Ex{\operatorname{Exc}}
\def\rank{\operatorname{rank}}
\def\sg{\operatorname{sg}}
\def\reg{\operatorname{reg}}
\def\snc{\operatorname{snc}}
\def\nsnc{\operatorname{nsnc}}
\def\max{\operatorname{max}}
\def\NS{\operatorname{NS}}
\newcommand{\F}{\mathcal{F}}
\newcommand{\Q}{\mathbb{Q}} 
\newcommand{\C}{\mathbb{C}} 
\newcommand{\Z}{\mathbb{Z}}
\newcommand{\PP}{\mathbb{P}}
\newcommand{\sO}{\mathcal{O}}
\theoremstyle{plain}
\newtheorem{thm}{Theorem}[section] 
\newtheorem{cor}[thm]{Corollary}
\newtheorem{prop}[thm]{Proposition}
\newtheorem{lem}[thm]{Lemma}
\theoremstyle{definition} 
\newtheorem{defn}[thm]{Definition}
\newtheorem{eg}[thm]{Example} 
\theoremstyle{remark}
\newtheorem{rem}[thm]{Remark}
\newtheorem{ques}[thm]{Question}
\newtheorem*{notation}{Notation} 
\newtheorem*{cl}{Claim}
\keywords{Frobenius split varieties; Globally $F$-regular varieties; Vanishing theorems; Differential forms}
\subjclass[2010]{14F17, 13A35}
\begin{document}
\tolerance = 9999

\maketitle
\markboth{TATSURO KAWAKAMI}{Bogomolov-Sommese type vanishing for globally $F$-regular threefolds}

\begin{abstract}
In this paper, we show that every invertible subsheaf of the cotangent bundle of a smooth globally $F$-regular threefold of characteristic $p>3$ has Iitaka dimension less than or equal to one. 
\end{abstract}

%%%%%%%%%%%%%%%%%%%%%%%%%%%%%%%%%%%%%%%%%%%%%%%%%%%%%%%%%%%%%%%%%%%%%%
%%%%%%%%%%%%%%%%%%%%%%%%%%%%%%%%%%%%%%%%%%%%%%%%%%%%%%%%%%%%%%%%%%%%%%
\section{Introduction}
%%%%%%%%%%%%%%%%%%%%%%%%%%%%%%%%%%%%%%%%%%%%%%%%%%%%%%%%%%%%%%%%%%%%%%
%%%%%%%%%%%%%%%%%%%%%%%%%%%%%%%%%%%%%%%%%%%%%%%%%%%%%%%%%%%%%%%%%%%%%%
Differential sheaves are vector bundles naturally attached to smooth algebraic varieties and it is important to study their positivity properties.
The following theorem describes the positivity of line bundles contained in the differential sheaves.

\begin{thm}[\textup{Bogomolov-Sommese vanishing theorem, \cite[Corollary 1.3]{Gra15}}]\label{BV}
Let $(X, \Delta)$ be a projective log canonical pair over the field of complex numbers $\C$ and $D$ a Weil divisor on $X$.
If $\sO_X(D)$ is a subsheaf of $\Omega_X^{[i]}(\log \Delta)$, then $\kappa(X, D) \leq i$. 
\end{thm}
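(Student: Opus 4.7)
The plan is to reduce Theorem \ref{BV} to the classical snc Bogomolov-Sommese vanishing by passing to a log resolution and pulling back the subsheaf of reflexive differentials. Let $\pi\colon\tilde X\to X$ be a log resolution of $(X,\Delta)$ and let $\tilde\Delta$ denote the reduced sum of the strict transform of $\lfloor\Delta\rfloor$ with the entire $\pi$-exceptional divisor, so that $(\tilde X,\tilde\Delta)$ is an snc pair. The goal is to manufacture an invertible subsheaf $\sO_{\tilde X}(\tilde D)\hookrightarrow \Omega_{\tilde X}^{i}(\log\tilde\Delta)$ with $\kappa(\tilde X,\tilde D)\ge \kappa(X,D)$, after which the smooth, snc case delivers the desired bound.

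The central ingredient is an extension theorem for reflexive logarithmic forms on log canonical pairs: every section of $\Omega_X^{[i]}(\log\Delta)$ extends, without acquiring new poles, to a regular section of $\Omega_{\tilde X}^{i}(\log\tilde\Delta)$ on the resolution. This produces an injection
\[
\pi^{[*]}\Omega_X^{[i]}(\log\Delta)\hookrightarrow \Omega_{\tilde X}^{i}(\log\tilde\Delta),
\]
which I would compose with $\pi^{[*]}$ of the assumed inclusion $\sO_X(D)\hookrightarrow \Omega_X^{[i]}(\log\Delta)$ and then saturate to obtain $\sO_{\tilde X}(\tilde D)\hookrightarrow \Omega_{\tilde X}^{i}(\log\tilde\Delta)$ for a suitable Weil divisor $\tilde D$ on $\tilde X$ whose pushforward recovers $D$ up to an effective exceptional correction. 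In particular $\kappa(\tilde X,\tilde D)\ge \kappa(X,D)$. With such an inclusion in hand on the snc pair, the classical Bogomolov-Sommese vanishing, proved via Esnault-Viehweg style Hodge-theoretic input, gives $\kappa(\tilde X,\tilde D)\le i$, and one concludes.

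The principal obstacle is the lc extension statement in the second step. In the klt setting this is the Greb-Kebekus-Kov\'acs-Peternell theorem, whose proof relies on strong Akizuki-Nakano type vanishing that breaks down for lc singularities because of the appearance of new lc centers. The approach I would pursue is to pass first to a dlt modification, where the non-klt locus becomes a well-behaved reduced divisor, and then perform a careful residue analysis along the exceptional components dominating lc centers to reduce to the klt case on the snc locus of the modification. The delicate bookkeeping is ensuring that the resulting invertible subsheaf on $\tilde X$ corresponds to a divisor $\tilde D$ whose Iitaka dimension is at least that of $D$, rather than being twisted down by an unknown effective exceptional divisor that could drop $\kappa$; this is essentially the content of \cite{Gra15} and is what makes the lc generalization of Bogomolov-Sommese substantially harder than its klt predecessor.
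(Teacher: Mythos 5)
The paper contains no proof of Theorem~\ref{BV}; it is imported verbatim from \cite[Corollary~1.3]{Gra15} and used as a black box, so there is no internal argument to compare yours against. Judged on its own, your outline is the standard reduction and it does succeed when $\sO_X(D)$ is $\Q$-Cartier: pull back along a log resolution, inject into $\Omega^{i}_{\tilde X}(\log\tilde\Delta)$ via the extension theorem, saturate, and invoke the snc Bogomolov--Sommese vanishing. That argument is essentially \cite[Theorem~7.2]{GKKP}. One correction of emphasis: the extension theorem $\pi_*\Omega^{i}_{\tilde X}(\log\tilde\Delta)\cong\Omega^{[i]}_X(\log\Delta)$ is proved in \cite{GKKP} for \emph{log canonical} pairs already, not merely klt ones, so the dlt-modification-plus-residue programme you describe as the principal obstacle is not where the difficulty of this statement lives.

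The genuine gap is the step you dismiss as ``delicate bookkeeping.'' The theorem as stated allows $D$ to be an arbitrary Weil divisor, and the inequality $\kappa(\tilde X,\tilde D)\ge\kappa(X,D)$ is precisely what fails to be automatic without a $\Q$-Cartier hypothesis: there is no pullback $\pi^*D$, the reflexive pullback $\pi^{[*]}\sO_X(D)$ need not have the same Iitaka dimension as $\sO_X(D)$, and saturating inside $\Omega^{i}_{\tilde X}(\log\tilde\Delta)$ only modifies $\tilde D$ along $\Ex(\pi)$ in a way you do not control. Concretely, a section $s\in H^0(X,\sO_X(mD))$ satisfies $\Div(s)+mD\ge 0$ on $X$, hence $\Div(\pi^*s)+m\tilde D\ge 0$ away from the exceptional locus, but the coefficients along exceptional divisors can be negative, so $s$ need not lift to $H^0(\tilde X,\sO_{\tilde X}(m\tilde D))$ and $\kappa$ can a priori drop. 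Removing the $\Q$-Cartier assumption is the entire content of \cite{Gra15} beyond \cite{GKKP}; writing ``this is essentially the content of \cite{Gra15}'' therefore concedes the one step that the theorem actually adds, and as it stands your proposal proves only the $\Q$-Cartier case.
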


Here, $\Omega_X^{[i]}(\log \Delta)$ denotes the reflexive differential form and $\kappa(X, D)$ denotes the Iitaka dimension of $D$ (see \cite[2.D.]{GKKP} for the details).
Theorem \ref{BV} is called a vanishing theorem because it is equivalent to saying that
\[
H^0(X, (\Omega^{[i]}_X(\log \Delta)\otimes \sO_X(-D))^{**})=0 
\]
for every Weil divisor $D$ with $\kappa(X, D)>i$, where $(-)^{**}$ is double dual.  Theorem \ref{BV} was proved by Bogomolov \cite{Bog} when $X$ is smooth and $\Delta=0$, and is generalized by Graf, Greb, Jabbusch, Kebekus, Kov\'acs, Peternell, Sommese, etc (\cite{Gra15}, \cite{GKK}, \cite{GKKP}, \cite{JK}, \cite{SS85}).
Bogomolov used Theorem \ref{BV} to prove the inequality $c_1^2 \leq 4c_2$ for smooth projective surfaces of general type. This was improved by Miyaoka \cite{Miy77} to $c_1^2 \leq 3c_2$, but Theorem \ref{BV} was still used in this proof.

In this paper, we discuss what happens if the variety is defined over a field of positive characteristic.  
In general, Theorem \ref{BV} fails in positive characteristic.  
There exists a smooth projective surface of general type which lifts to the ring of Witt vectors (and therefore satisfies the Kodaira vanishing theorem), but its cotangent bundle contains a big invertible sheaf (see \cite[Example 1]{Lan15a}). If $\Delta\neq0$, then Theorem \ref{BV} fails even if $X$ is a smooth projective rational surface (see \cite[Lemma 8.3]{Lan}, \cite[Proposition 11.1]{Langer19}).
However, Mehta-Ramanathan \cite{mehta--ramanathan} proved that Kodaira type vanishing theorems hold for smooth projective $F$-split varieties,
an important class of algebraic varieties defined in terms of Frobenius splitting.  
Then it is natural to ask the following question.
\begin{ques}\label{BVP}
Does the Bogomolov-Sommese vanishing hold for smooth projective $F$-split varieties?
\end{ques}

We can show an analog of Theorem \ref{BV} holds for smooth $F$-split surfaces when $\Delta=0$ (see Theorem \ref{BVonFSS}),
but the proof heavily depends on the classification of smooth projective surfaces and does not work for higher-dimensional varieties.
In higher-dimensional cases, we consider globally $F$-regular varieties, a special class of $F$-split varieties, introduced by Smith \cite{Smi}
(see Definition \ref{defi-gFreg} (2) for the definition). For example,  smooth $F$-spilt Fano varieties are globally $F$-regular. We give a partial affirmative answer to Question \ref{BVP} when $X$ is a smooth globally $F$-regular threefold. 

\begin{thm}[Theorem \ref{main}]\label{mainintro}
Let $X$ be a smooth projective globally $F$-regular threefold over a perfect field of characteristic $p>3$. 
Let $\sO_X(D) \subset \Omega_X$ is an invertible subsheaf. 
Then $\kappa(X, D) \leq 1$.
Furthermore, if $p>7$, then we have $\kappa(X, D) \leq 0$.
\end{thm}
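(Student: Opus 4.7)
The plan is to argue by contradiction, assuming $\kappa(X, D) \geq 2$ (and later $\kappa(X, D) \geq 1$ for the stronger assertion) and reducing to the surface case by restricting to a general ample hypersurface, where Theorem \ref{BVonFSS} applies. First, I would invoke a Bertini-type theorem for globally $F$-regular varieties (due to Schwede--Smith and Patakfalvi--Schwede) to ensure that for a sufficiently ample $H$ on $X$, a general member $Y \in |H|$ is a smooth globally $F$-regular surface; in particular, $Y$ is $F$-split. Replacing $H$ by a large enough multiple and using the standard behavior of Iitaka dimension under restriction to a very general ample hypersurface yields $\kappa(Y, D|_Y) \geq \min\{\kappa(X, D), 2\}$.

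Next, I would combine the restricted inclusion $\sO_Y(D|_Y) \hookrightarrow \Omega_X|_Y$ with the conormal exact sequence
\[
0 \to \sO_Y(-H|_Y) \to \Omega_X|_Y \to \Omega_Y \to 0,
\]
and split the argument into two cases. If the composition $\sO_Y(D|_Y) \to \Omega_Y$ vanishes, the resulting inclusion $\sO_Y(D|_Y) \hookrightarrow \sO_Y(-H|_Y)$ forces $-D|_Y - H|_Y$ to be effective, so $-D|_Y$ dominates the ample divisor $H|_Y$ and hence $\kappa(Y, D|_Y) = -\infty$, contradicting $\kappa(Y, D|_Y) \geq 1$. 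If instead the composition is injective, then $\sO_Y(D|_Y) \subset \Omega_Y$ is an invertible subsheaf on a smooth $F$-split surface, so Theorem \ref{BVonFSS} gives $\kappa(Y, D|_Y) \leq 1$, contradicting $\kappa(Y, D|_Y) \geq 2$. Either way the assumption $\kappa(X, D) \geq 2$ is untenable, and we conclude $\kappa(X, D) \leq 1$.

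For the refined bound $\kappa(X, D) \leq 0$ under $p > 7$, the same dichotomy applied with the weaker assumption $\kappa(X, D) \geq 1$ reduces the problem to a strengthened surface-level statement: on a smooth globally $F$-regular surface in characteristic $p > 7$, every invertible subsheaf of $\Omega_Y$ has Iitaka dimension at most zero. I expect this to be the main obstacle, since smoothness and $F$-splitness alone do allow invertible subsheaves of $\Omega_Y$ with $\kappa = 1$ (for instance via elliptic fibrations), so the argument must genuinely use the stronger positivity supplied by global $F$-regularity, exploiting that such surfaces are of log del Pezzo type (so $-K_Y$ is big) together with $p$-dependent Frobenius semistability of $\Omega_Y$. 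The thresholds $p > 3$ and $p > 7$ are suggestive of Bogomolov-type slope inequalities whose Frobenius-corrected forms become effective precisely past these characteristics.
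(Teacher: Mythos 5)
There is a genuine gap at the very first step, and it is fatal to the whole strategy. A general member $Y$ of a \emph{sufficiently ample} linear system $|H|$ on a smooth globally $F$-regular threefold is essentially never globally $F$-regular, nor even $F$-split. By adjunction $K_Y=(K_X+H)|_Y$, so once $H$ is ample enough, $K_Y$ is ample and $Y$ is of general type; but global $F$-regularity forces $-K_Y$ to be big (\cite[Corollary 4.5]{SS10}) and $F$-splitness already forces $-K_Y$ to be effective, so such a $Y$ cannot carry either property. The Bertini-type statements you have in mind concern the \emph{local} property of strong $F$-regularity of singularities, not the global one; global $F$-regularity descends along images and along general fibers of morphisms with $f_*\sO_X=\sO_Y$ (Remark \ref{gFreg} (2), Theorem \ref{fiber}), but not to general hyperplane sections. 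Without $Y$ being $F$-split, Theorem \ref{BVonFSS} is unavailable and your dichotomy via the conormal sequence collapses. A further sanity check: if your argument worked, Theorem \ref{BVonFSS} (which gives $\kappa\leq 0$, not $\kappa\leq 1$ as you quote it) would yield $\kappa(X,D)\leq 0$ for every $p>0$, a strictly stronger conclusion than the theorem asserts, which should have signalled that something is off. Finally, the $p>7$ refinement is left unproved, and the mechanism you conjecture (slope inequalities on surface sections) is not where the hypothesis enters.

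For comparison, the paper's route is entirely different and explains where the hypotheses on $p$ are used: one runs the $K_X$-MMP (this is where $p>3$ is needed) to reach a Mori fiber space $g\colon X'\to Y$, transfers the problem to $X'$ via Lemma \ref{push}, and then argues case by case on $\dim Y$. When $\dim Y=0$ the pushed-forward divisor is ample and Theorem \ref{key} (a Cartier-operator argument on the singular but terminal $X'$) applies; when $\dim Y=1$ one restricts to a \emph{general fiber} of $g$ --- which, unlike a hyperplane section, \emph{is} globally $F$-regular by Theorem \ref{fiber} --- and applies Lemma \ref{BVonGFR2}; when $\dim Y=2$ (and, for $p>7$, also when $\dim Y=1$) one uses separable rational connectedness of $X'$ via Theorem \ref{glsrc}, which is precisely where $p>7$ enters. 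If you want to keep a restriction-to-a-surface argument, the correct object to restrict to is a general fiber of a fibration, not a general ample divisor.
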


We need the assumption ``$p>3$'' only for running the minimal model program (MMP, for short), which was recently established for threefolds of characteristic $p>3$. (see \cite{Birker}, \cite{BW17}, \cite{HW19}, and \cite{HX15} for the details). 
In order to prove Theorem \ref{mainintro}, we show that $H^0(X, \Omega_X\otimes \sO_X(-D))=0$ for every Cartier divisor with $\kappa(X, D)\geq2$. We first consider the case where $D$ is nef and big.

\begin{thm}[Theorem \ref{key}]\label{IntroKey}
Let $X$ be a projective globally $F$-regular variety over a perfect field of characteristic $p>0$ and $\Delta$ a Weil divisor on $X$.
Suppose that $\dim X \geq 2$ and the non-simple normal crossing locus of $(X, \Delta)$ has codimension at least $3$. 
Then $H^0(X, (\Omega_X^{[1]}(\log \, \Delta)\otimes \sO_X(-D))^{**})=0$ for every nef and big $\Q$-Cartier Weil divisor $D$ on $X$.
\end{thm}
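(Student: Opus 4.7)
The plan is to argue by contradiction, using the logarithmic Cartier isomorphism iteratively to produce from a hypothetical section a sequence of sub-line-bundles of $\Omega_X^{[1]}(\log\Delta)$ of arbitrarily large $H$-slope, contradicting the finiteness of the maximal slope.

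First I would set $n=\dim X$ and let $U\subseteq X$ denote the snc locus of $(X,\Delta)$, so $\mathrm{codim}_X(X\setminus U)\ge 3$; after clearing denominators I may assume $D$ is Cartier. Since all sheaves in sight are reflexive, the codimension hypothesis lets me freely pass between $H^{0}$ on $X$ and on $U$. Arguing by contradiction, I suppose that $\sigma\colon \sO_X(D)\hookrightarrow \Omega_X^{[1]}(\log\Delta)$ is a nonzero injection.

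The core of the argument is an iteration step using log Cartier. Pulling $\sigma$ back by the (flat) absolute Frobenius on $U$ produces $F^{*}\sigma\colon \sO_U(pD)\hookrightarrow F^{*}\Omega_U^{1}(\log\Delta)$, and the logarithmic Cartier isomorphism identifies the target with $Z^{1}/B^{1}$, where $B^{1}\subseteq Z^{1}\subseteq F_{*}\Omega_U^{1}(\log\Delta)$ are the boundary/cycle subsheaves of the log de Rham complex. Defining $B^{1}:=F_{*}\sO_X/\sO_X$ globally on $X$ (matching the snc definition on $U$), lifting $F^{*}\sigma$ to a section of $Z^{1}\otimes\sO_X(-pD)$ requires the vanishing of the obstruction in $H^{1}(X, B^{1}\otimes\sO_X(-pD))$. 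Twisting the tautological sequence $0\to\sO_X\to F_{*}\sO_X\to B^{1}\to 0$ by $\sO_X(-pD)$ and applying the projection formula gives
\[
0\to \sO_X(-pD)\to F_{*}\sO_X(-p^{2}D)\to B^{1}\otimes\sO_X(-pD)\to 0,
\]
which sandwiches the obstruction between $H^{1}(X,\sO_X(-p^{2}D))$ and $H^{2}(X,\sO_X(-pD))$. Both vanish by Kawamata--Viehweg vanishing on the globally $F$-regular variety $X$, in its Serre-dual form $H^{i}(X,\sO_X(-L))=0$ for $L$ nef and big $\Q$-Cartier and $0<i<n$, as soon as $n\ge 3$. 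The resulting nonzero lift $\tilde\sigma\in H^{0}(X, Z^{1}\otimes\sO_X(-pD))$ then sits inside $H^{0}(X, F_{*}\Omega_U^{1}(\log\Delta)\otimes\sO_X(-pD))$, which by the projection formula is $H^{0}(X,(\Omega_X^{[1]}(\log\Delta)\otimes\sO_X(-p^{2}D))^{**})$, giving a nonzero injection $\sO_X(p^{2}D)\hookrightarrow \Omega_X^{[1]}(\log\Delta)$.

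Iterating produces nonzero injections $\sO_X(p^{2e}D)\hookrightarrow \Omega_X^{[1]}(\log\Delta)$ for every $e\ge 0$. Fixing any ample divisor $H$, the fact that $D$ is nef and big gives $D\cdot H^{n-1}>0$, so the $H$-slopes $p^{2e}\, D\cdot H^{n-1}$ of these sub-line-bundles tend to infinity, contradicting the finiteness of $\mu_{H,\max}(\Omega_X^{[1]}(\log\Delta))$. The hard parts will be the cohomological bookkeeping needed to move the log Cartier formalism between $U$ and $X$ (controlled by the codim-$\ge 3$ hypothesis together with depth of reflexive sheaves) and the essential use of $n\ge 3$ to kill $H^{2}(X,\sO_X(-pD))$; the surface case, where the codimension condition forces $\Delta$ itself to be snc, will presumably need a separate argument based on the classification of smooth globally $F$-regular surfaces.
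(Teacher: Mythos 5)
Your overall strategy---use the logarithmic Cartier isomorphism to lift a putative section of $\Omega_X^{[1]}(\log\Delta)\otimes\sO_X(-D)$ through $Z^1_U(\log\Delta)$, with obstruction in $H^1$ of $B^1$ twisted down, and then iterate to force $\sO_X(p^{\bullet}D)\hookrightarrow\Omega_X^{[1]}(\log\Delta)$ until something breaks---is exactly the engine of the paper's proof (the paper terminates via Lemma \ref{h^0vani} rather than slope boundedness, but that difference is cosmetic). The genuine gap is in how you kill the obstruction. You run the long exact sequence of $0\to\sO\to F_*\sO\to B^1\to 0$ and therefore need \emph{both} $H^1(\sO(-pD))=0$ and $H^2(\sO(-D))=0$. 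First, this excludes $\dim X=2$ outright, and your fallback of ``classification of globally $F$-regular surfaces'' does not obviously work in the logarithmic setting: Remark \ref{BVonSur} records pairs $(S,E)$ with $S$ a smooth rational surface and $\Omega_S(\log E)$ containing a big invertible sheaf, so rationality alone is not enough. Second, and more seriously, even for $n\ge 3$ the vanishing you need is not $H^2(X,\sO_X(-D))=0$ (which Smith's Kawamata--Viehweg-type theorem does give) but $H^2(U,\sO_U(-D))=0$, where $U$ is the snc locus on which alone $Z^1$, $B^1$ and the Cartier operator are defined. The hypothesis $\codim_X Z\ge 3$ together with Cohen--Macaulayness gives $\mathcal{H}^j_Z=0$ for $j<3$, hence $H^i_Z(X,\sO_X(-D))=0$ for $i\le 2$; this controls $H^1(U,\sO_U(-D))$ via $H^1(X)\to H^1(U)\to H^2_Z=0$, but says nothing about $H^2(U)$, which sits in $H^2(X)\to H^2(U)\to H^3_Z$ with $H^3_Z$ uncontrolled (and typically nonzero when $\codim Z=3$). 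So your obstruction computation does not close.

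The missing idea is to use the $F$-splitting hypothesis at precisely this point: since $X$ is ($F$-split, being) globally $F$-regular, the sequence $0\to\sO_U\to F_*\sO_U\to B^1_U\to 0$ \emph{splits}, so $H^1(U,B^1_U\otimes\sO_U(-D))$ is a direct summand of $H^1(U,F_*\sO_U\otimes\sO_U(-D))=H^1(U,\sO_U(-pD))$, and only the $H^1$-vanishing on $U$ is needed---which is exactly what the codimension-$3$ local cohomology argument plus \cite[Corollary 4.4]{Smi} deliver, uniformly in $\dim X\ge 2$. Two further, more minor, repairs: the Cartier isomorphism identifies $\Omega_U^1(\log\Delta)$ (not $F^*\Omega_U^1(\log\Delta)$) with $Z^1/B^1$, so each lifting step multiplies $D$ by $p$, not $p^2$; and you cannot ``clear denominators'' to assume $D$ Cartier in the $H^0$-statement, since $\sO_X(D)\subset\Omega_X^{[1]}(\log\Delta)$ does not imply $\sO_X(mD)\subset\Omega_X^{[1]}(\log\Delta)$---the reduction to Cartier divisors must be done inside the cohomology vanishing (via the global $F$-regularity splitting $\sO_X\hookrightarrow F^e_*\sO_X(p^mD)$), as in the paper's Claim and Lemma \ref{h^0vani}.
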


In Theorem \ref{IntroKey}, we use the global $F$-regularity of $X$ to singular varieties.
In the proof of Theorem \ref{mainintro}, we run the MMP to make $D$ satisfy the assumption of Theorem \ref{IntroKey}.
In general, even if we start from a smooth variety, an output of the MMP is not necessarily smooth.
This is the reason why we have to consider singular varieties in Theorem \ref{IntroKey}.

\begin{notation}
Throughout this paper, we work over a perfect field $k$ of characteristic $p>0$.
A \textit{variety} means an integral separated scheme of finite type over $k$. 
A \textit{curve} (resp.~\textit{surface}, \textit{threefold}) means
a variety of dimension one (resp.~two, three).
\end{notation}

%%%%%%%%%%%%%%%%%%%%%%%%%%%%%%%%%%%%%%%%%%%%%%%%%%%%%%%%%%%%%%%%%%%%%%
%%%%%%%%%%%%%%%%%%%%%%%%%%%%%%%%%%%%%%%%%%%%%%%%%%%%%%%%%%%%%%%%%%%%%%
\section{Preliminaries}
%%%%%%%%%%%%%%%%%%%%%%%%%%%%%%%%%%%%%%%%%%%%%%%%%%%%%%%%%%%%%%%%%%%%%%
%%%%%%%%%%%%%%%%%%%%%%%%%%%%%%%%%%%%%%%%%%%%%%%%%%%%%%%%%%%%%%%%%%%%%%
\subsection{Reflexive sheaves and birational maps}
Let $X$ be a normal variety. For any coherent sheaf $\F$ on $X$, we denote by $\F^{**}$ the \textit{double dual} of $\F$, that is, ${\F}^{**}\coloneqq\mathcal{H}om_{\sO_X}(\mathcal{H}om_{\sO_X}(\F, \sO_X), \sO_X)$. The sheaf $\F$ is called \emph{reflexive} if the natural map $\F \to \F^{**}$ is an isomorphism. 
A \emph{Weil divisorial sheaf} is a reflexive sheaf of rank one. We recall that there is one-to-one corresponding between a Weil divisor $D$ on X and a Weil divisorial sheaf $\sO_X(D)$.\\
We say that $(X, \Delta)$ is a \emph{pair} if $X$ is a normal variety and $\Delta$ is an effective Weil divisor on $X$. We say that a pair $(X, D)$ is \textit{reduced} if $D$ is reduced.
We denote by $X_{\reg}$ (resp.~$X_{\sg}$) the \emph{regular locus} (resp.~\emph{singular locus}) of $X$. 
We denote by $(X, \Delta)_{\snc}$ (resp.~$(X, \Delta)_{\nsnc}$) the \emph{simple normal crossing locus} (resp.~\emph{non-simple normal crossing locus}) of a pair $(X, \Delta)$.
For a pair $(X, \Delta)$ satisfying $\codim_X((X, \Delta)_{\nsnc})\geq 2$, we define the sheaf of the \emph{reflexive differential $j$-form} as
$\Omega_X^{[j]}(\log\, \Delta) \coloneqq i_{*}(\Omega_U^j(\log\, \Delta))$, 
where $i\colon U\coloneqq (X, \Delta)_{\snc}\hookrightarrow X$ is the natural inclusion map
and $\Omega_U^j(\log \, \Delta)$ is the sheaf of the logarithmic K\"ahler differentials.

Let us recall the definition of the Iitaka dimension for Weil divisors. 
\begin{defn}[\textup{\cite[Definition 2.18]{GKKP}}]
Let $X$ be a normal projective variety and $D$ a Weil divisor on $X$.  
We define the \emph{Iitaka dimension}$\in\{0,1,\cdots,\dim X\}$ as follows.
If $h^0(X, \sO_X(mD)) = 0$ for all $m \in \Z_{>0}$, we say $D$ has Iitaka dimension 
$\kappa(X, D) \coloneqq -\infty$.  Otherwise, set
\[ M \coloneqq \bigl\{ m\in \Z_{>0} \;\big|\; h^0(X, \sO_X(mD)) > 0 \bigr\}, \]
and consider the natural rational mappings
\[ \phi_m \colon X \dasharrow \mathbb P\bigl(H^0(X, \sO_X(mD))^*\bigr) \quad \text{ for each } m \in M. \]
Note that we can consider the rational map as above since $\sO_X(D)$ is invertible on $X_{\reg}$. 
The Iitaka dimension of $D$ is then defined as
\[ \kappa(X, D) \coloneqq \max_{m \in M} \bigl\{ \dim \overline{\phi_m(X)} \bigr\}. \]
We say $D$ is \emph{big} if $\kappa(X, D) = \dim X$.
We note that when $D$ is ($\Q$-)Cartier, the above definition coincides with that of the Iitaka dimension for ($\Q$-)Cartier divisors (see \cite[Definition 2.13]{Lazarsfeld} for example).
 \end{defn}

Let $f \colon X \to Y$ be a birational morphism of normal varieties. 
For a prime divisor $F$ on $X$, 
the {\em push-forward} $f_*F$ of $F$ by $f$ (or to $Y$) is defined as follows. 
If $F \subset \Ex(f)$, then $f_*F=0$, and if $F \not\subset \Ex(f)$, 
then $f_*F$ is defined as the prime divisor whose generic point equals to the generic point of $f(F)$. 
For a $\Q$-Weil divisor $D$ on $X$, 
the {\em push-forward} $f_*D$ of $D$ by $f$ 
is defined as $\sum_{i \in I} r_i f_*D_i$, where $D=\sum_{i \in I} r_i D_i$ is the irreducible decomposition of $D$. 
Next, let $f \colon X \dashrightarrow Y$ be a birational map of normal varieties. 
For a $\Q$-Weil divisor $D$ on $X$,
the {\em push-forward} $f_*D$ of $D$ by $f$ is defined as $(f|_U)_*D|_{U}$,
where $U$ denotes the maximum open subset of $X$ such that $f$ is defined.
Let $f \colon X \dasharrow Y$ be a birational map of normal varieties.
We say $f$ is a {\em birational contraction} if $f^{-1}$ does not contract any divisor, that is, there is no divisor $D$ on $Y$ such that $f^{-1}_{*}D=0$.
We note that both divisorial contractions and flips, which appear in a sequence of the MMP, are birational contractions. 

The following lemma shows that a Bogomolov-Sommese type vanishing can be reduced to an output of the MMP.

\begin{lem}\label{push}
Let $f \colon X\dasharrow X'$ be a birational contraction of normal projective varieties and $D$ a Weil divisor on $X$.
Let $D'$ be the push-forward of $D$ by $f$. Then the following hold.
\begin{enumerate}
    \item[\textup{(1)}] $\kappa(X, D)\leq \kappa(X',D')$. 
    \item[\textup{(2)}] There exists an injective map $H^0(X, (\Omega_X^{[i]}\otimes \sO_X(-D))^{**})\hookrightarrow H^0(X', (\Omega_{X'}^{[i]}\otimes \sO_{X'}(-D'))^{**})$. 
\end{enumerate}
\end{lem}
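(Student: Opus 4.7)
My plan is to realize the two cohomology groups as governed by sections on suitable big open subsets where $f$ or $f^{-1}$ restricts to a morphism, and then to transport sections via the birational identification $k(X) = k(X')$. The main technical obstacle, and the point where the birational contraction hypothesis enters, is ensuring that the relevant open subset of $X'$ has complement of codimension $\geq 2$.

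For (1), I identify
\[
H^0(X, \sO_X(mD)) = \{\phi \in k(X)^{\times} \mid \Div_X(\phi) + mD \geq 0\} \cup \{0\},
\]
and similarly for $X'$. Each prime divisor $E'$ on $X'$ has a unique prime divisor $E = (f^{-1})_{*}E'$ on $X$ attached to it, because $f$ is a birational contraction; moreover $\operatorname{ord}_{E'} = \operatorname{ord}_{E}$ on $k(X) = k(X')$, and $\operatorname{ord}_{E'}(D') = \operatorname{ord}_{E}(D)$ by the definition of push-forward. Hence the condition cutting out $H^0(X', \sO_{X'}(mD'))$ is tested at only a subfamily of the prime divisors cutting out $H^0(X, \sO_X(mD))$, which gives an inclusion $H^0(X, \sO_X(mD)) \hookrightarrow H^0(X', \sO_{X'}(mD'))$. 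Composing with the birational map, $\phi_m$ factors as $X \dasharrow X' \dasharrow \PP(H^0(X, \sO_X(mD))^{*})$, in which the second arrow is obtained from $\psi_m \colon X' \dasharrow \PP(H^0(X', \sO_{X'}(mD'))^{*})$ by a linear projection; thus $\dim \overline{\phi_m(X)} \leq \dim \overline{\psi_m(X')} \leq \kappa(X', D')$, and taking the supremum over $m$ proves (1).

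For (2), let $U$ be the intersection of $X_{\reg}$ with the domain of $f$, and $V_0$ the intersection of $X'_{\reg}$ with the domain of $f^{-1}$; both have complements of codimension $\geq 2$. The restriction $g := f^{-1}|_{V_0}\colon V_0 \to X$ is a morphism which, by the birational contraction hypothesis, contracts no divisor on $V_0$. Setting $V := g^{-1}(U)$, no divisor of $V_0$ can map into the codimension-$2$ subset $X \setminus U$, so $V_0 \setminus V$ has codimension $\geq 2$ in $V_0$, and hence $X' \setminus V$ has codimension $\geq 2$ in $X'$. Now $g|_V \colon V \to U$ is a morphism of smooth varieties; identifying each prime divisor $\tilde E$ of $V$ with its closure in $X'$ and its image $E := (f^{-1})_{*}\tilde E$ in $X$, a direct valuation computation yields $g|_V^{*}(D|_U) = D'|_V$ as Cartier divisors. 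Combining the natural pullback $g|_V^{*}\Omega_U^i \to \Omega_V^i$ with this identification produces a sheaf map
\[
g|_V^{*}\bigl(\Omega_U^i \otimes \sO_U(-D|_U)\bigr) \to \Omega_V^i \otimes \sO_V(-D'|_V);
\]
taking global sections and invoking reflexivity, which identifies $H^0$ of the double-dual sheaves with sections over $U$ and $V$ respectively, yields the desired map. Injectivity follows since $g|_V$ is birational: the induced pullback on $i$-forms is an isomorphism on a dense open, while the target sheaves are torsion-free.
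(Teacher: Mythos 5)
Your proposal is correct and follows essentially the same route as the paper: both proofs reduce everything to a big open subset of $X'$ over which $f^{-1}$ is a morphism that is a local isomorphism in codimension one (this is exactly where the birational-contraction hypothesis enters), identify $D$ with $D'$ there, and then pass back to $X$ and $X'$ using reflexivity/torsion-freeness. Your version merely makes explicit, via the valuation computation $\operatorname{ord}_{E'}=\operatorname{ord}_{(f^{-1})_*E'}$ and the factorization of $\phi_m$ through a linear projection, some steps the paper leaves implicit.
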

\begin{proof}
We first show the assertion (2).
Since $f^{-1}$ does not contract any divisor, there exists an open subset $V\subset X'$ with $\codim_{X'}(X'-V)\geq 2$ such that $f|_{f^{-1}(V)} \colon f^{-1}(V) \cong V$ is an isomorphism and $V$ is contained in $X'_{\reg}$. 
Then we have 
\[
\begin{array}{rl}
H^0(X, (\Omega^{[i]}_X\otimes \sO_X(-D))^{**})\subset&H^0(f^{-1}(V), \Omega^{i}_{f^{-1}(V)}\otimes \sO_{f^{-1}(V)}(-D))\\
                                                          =&H^0(V, \Omega^{i}_{V}\otimes \sO_{V}(-D'))\\
                                                          =&H^0(X', (\Omega^{[i]}_{X'}\otimes \sO_{X'}(-D'))^{**}),\\
                                                              \end{array}
\]
and hence we obtain the assertion (2).
An argument similar to the above shows that $H^0(X, \sO_X(mD))\subset H^0(X', \sO_{X'}(mD'))$ for all $m\in \Z_{>0}$ and hence we obtain the assertion (1).
\end{proof}

%%%%%%%%%%%%%%%%%%%%%%%%%%%%%%%%%%%%%%%%%%%%%%%%%%%%%%%%%%%%%%%%%%%%%%%%%%%%%%%%%%%%%%%%%%%%%%%%%%%%%%%%%%%%%%%%%%%%%%%%%%%%%%%%%%%%%%%%%%%%%%%%%%%%%%%%%%%%%%%%%%%%%%
\subsection{$F$-split and globally $F$-regular varieties}
In this subsection, we gather the results about $F$-split and globally $F$-regular varieties.

\begin{defn}[\textup{\cite{mehta--ramanathan}, \cite{Smi}}]\label{defi-gFreg} 
Let $X$ be a normal variety. 
\begin{enumerate}
\item[\textup{(1)}] We say that $X$ is \textit{(globally) $F$-split} if the Frobenius map $\mathcal{O}_X \to F_{*}\mathcal{O}_X$ splits as an $\mathcal{O}_X$-module homomorphism. 
\item[\textup{(2)}] We say that $X$ is \textit{globally $F$-regular} if for every effective Weil divisor $D$ on $X$, there exists an integer $e \geq 1$ such that the composite map 
\[
\mathcal{O}_X \to F^e_*\mathcal{O}_X \hookrightarrow F^e_*\mathcal{O}_X(D)
\]
of the $e$-times iterated Frobenius map $\mathcal{O}_X \to F^e_*\mathcal{O}_X$ and the natural inclusion $F^e_*\mathcal{O}_X \hookrightarrow F^e_*\mathcal{O}_X(D)$ 
splits as an $\mathcal{O}_X$-module homomorphism. 
\end{enumerate}
\end{defn}

\begin{rem}\label{gFreg}
\begin{enumerate}
\item[\textup{(1)}] Let $X$ be a globally $F$-regular variety and $D$ an effective Weil divisor on $X$.
Then the map
$\mathcal{O}_X  \hookrightarrow F^e_*\mathcal{O}_X(D)$
splits as an $\mathcal{O}_X$-module homomorphism for all sufficiently large $e$ by \cite[Proposition 3.8]{SS10}.

\item[\textup{(2)}] Let $f\colon X \dasharrow Y$ be a small birational map or a projective surjective morphism satisfying $f_{*}\sO_X=\sO_Y$ of normal varieties.  
If $X$ is $F$-split (resp.~globally $F$-regular), then so is $Y$ by \cite[Lemma 1.5]{GLP$^+$15}. In particular, if we start the MMP from an $F$-split (resp.~globally $F$-regular) variety, then an output of the MMP is also $F$-split (resp.~globally $F$-regular).

\item[\textup{(3)}] Globally $F$-regular varieties are Cohen-Macaulay by \cite[Proposition 4.1]{Smi}.
\end{enumerate}
\end{rem}

We need the following theorems in Section \ref{Sec:GFR}.

\begin{thm}[\textup{\cite[Theorem 2.1]{GLP$^+$15}}]\label{fiber}
Let $f \colon X \to Y$ be a projective surjective morphism of normal varieties satisfying $f_{*}\sO_X = \sO_Y$.
If $X$ is globally $F$-regular, then a general fiber of $f$ is normal and globally $F$-regular.
\end{thm}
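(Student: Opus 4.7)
The plan is to first establish that the generic fiber $X_\eta$ over the generic point $\eta\in Y$ is globally $F$-regular over $K(Y)$, and then to spread this property out to a general closed fiber. For the generic fiber, let $\mathfrak{D}$ be an arbitrary effective Weil divisor on $X_\eta$, and let $D$ denote its Zariski closure in $X$. By global $F$-regularity of $X$, the composition $\sO_X\to F^e_*\sO_X\hookrightarrow F^e_*\sO_X(D)$ splits for some $e\geq 1$. Localizing at $\eta$, and using that Frobenius commutes with flat base change, yields a splitting of $\sO_{X_\eta}\to F^e_*\sO_{X_\eta}(\mathfrak{D})$. Since $\mathfrak{D}$ was arbitrary, $X_\eta$ is globally $F$-regular over $K(Y)$; in particular it is normal and reduced.

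To deduce the statement for general closed fibers, one cannot simply repeat the above argument divisor by divisor on each $X_y$. Instead, the strategy is to invoke the single-splitting characterization of global $F$-regularity due to Schwede-Smith: a normal projective variety is globally $F$-regular if and only if there exists an effective divisor $H$ with affine complement and an integer $e\geq 1$ such that $\sO\to F^e_*\sO(H)$ splits. Applying this criterion to $X_\eta$ produces data $(H_\eta,\sigma_\eta,e)$, and standard spreading-out arguments then yield an effective divisor $\tilde H$ on $f^{-1}(U)$ and a splitting $\tilde\sigma$ over $f^{-1}(U)$, for some dense open $U\subset Y$. After shrinking $U$ by generic flatness so that $\tilde H$ is $Y$-flat, for each closed point $y\in U$ the restriction $\tilde H|_{X_y}$ is a well-defined effective Cartier divisor with affine complement, and $\tilde\sigma$ restricts to a splitting of $\sO_{X_y}\to F^e_*\sO_{X_y}(\tilde H|_{X_y})$. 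Thus $X_y$ satisfies the Smith-Schwede criterion for global $F$-regularity on a nonempty open subset of $Y$, and normality of $X_y$ follows automatically.

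The main obstacle will be the Frobenius base-change step needed to verify that $\tilde\sigma$ restricts to a genuine splitting on a general closed fiber. This relies on the compatibility of $F^e_*\sO(H)$ with restriction when $H$ is $Y$-flat, which can be arranged by shrinking $U$. One must also ensure that $X_y$ is integral for the notion of globally $F$-regular to apply: geometric connectedness of the general fiber is given by $f_*\sO_X=\sO_Y$, and reducedness of the generic fiber propagates to general closed fibers by openness of the reduced locus in a flat family. No deep geometric input is required beyond these base-change bookkeeping issues.
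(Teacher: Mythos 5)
This statement is quoted in the paper from \cite[Theorem 2.1]{GLP$^+$15} without proof, so the comparison here is with the argument in that reference; your reconstruction follows the same general philosophy (pass through the generic fibre, then descend to general closed fibres), and your first step --- closing up a divisor $\mathfrak D$ on $X_\eta$ to $D$ on $X$ and localizing the splitting, which is legitimate because $F^e_*$ commutes with localization --- is fine. However, the descent to closed fibres, which is where all the content of the theorem lies, has two genuine gaps. First, the ``single-splitting characterization'' you invoke is misquoted: in \cite[Theorem 3.9]{SS10} one needs an effective divisor $H$ such that $\sO\to F^e_*\sO(H)$ splits \emph{and} $X\setminus\Supp H$ is globally (equivalently, strongly) $F$-regular; an affine complement alone does not suffice, since the splitting restricted to $X\setminus\Supp H$ only witnesses $F$-splitness there and sees nothing of the local singularities off $H$. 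Consequently, even after spreading out $(H_\eta,\sigma_\eta)$, you would still have to show that strong $F$-regularity of the affine complement passes from the generic fibre to general closed fibres --- which is precisely the local version of the statement you are trying to prove, so as structured the argument defers rather than resolves the main difficulty.

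Second, the ``Frobenius base-change step'' is not repaired by making $\tilde H$ flat over $Y$. For a closed point $y\in U$ with maximal ideal $\mathfrak m_y$ one has $F^e_*\sO_X(\tilde H)\otimes\sO_{X_y}\cong F^e_*\bigl(\sO_X(\tilde H)/\mathfrak m_y^{[p^e]}\sO_X(\tilde H)\bigr)$, which surjects onto, but is not equal to, $F^e_*\bigl(\sO_{X_y}(\tilde H|_{X_y})\bigr)$; the discrepancy is caused by the Frobenius acting on the base direction and is completely insensitive to flatness of $\tilde H$. A splitting $\tilde\sigma$ therefore induces a splitting on $X_y$ only if $X_y$ is compatible with $\tilde\sigma$, i.e.\ $\tilde\sigma\bigl(F^e_*(\mathfrak m_y\sO_X(\tilde H))\bigr)\subset\mathfrak m_y\sO_X$, which is not automatic. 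The standard repair --- and the missing idea --- is to use global $F$-regularity of $X$ once more to split $\sO_X\to F^e_*\sO_X\bigl(\tilde H+(p^e-1)f^*A\bigr)$ for a suitable divisor $A$ on $Y$ passing through $y$ (cut out by local parameters $t_1,\dots,t_d$ at $y$); precomposing with multiplication by $(f^*t_1\cdots f^*t_d)^{p^e-1}$ forces compatibility with $X_y$ and makes the splitting descend to the fibre. Without this twist (or an equivalent device such as relative Frobenius / $F$-adjunction along the fibre), the restriction step fails, and the proof is incomplete.
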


\begin{thm}[\textup{Proof of \cite[Theorem 4.1]{GLP$^+$15}}]\label{glsrc}
Let $f \colon X\to Y$ be a projective surjective morphism from a terminal globally $F$-regular threefold to a normal variety over an algebraically closed field of characteristic $p>0$ satisfying $f_*\mathcal{O}_X=\mathcal{O}_Y$.
Suppose that $-K_X$ is $f$-ample and one of the following conditions holds.
\begin{enumerate}
    \item[\textup{(1)}] $\dim Y=2$.
    \item[\textup{(2)}] $p>7$ and $\dim Y=1$.
\end{enumerate}
Then $X$ is separably rationally connected.
\end{thm}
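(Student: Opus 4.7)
The plan is to analyze the fibers and base of $f$ using Theorem~\ref{fiber} and Remark~\ref{gFreg}(2), and then combine them to produce very free rational curves through a general point of $X$.

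First, by Theorem~\ref{fiber}, a general fiber $F$ of $f$ is normal and globally $F$-regular. Since general fibers are contained in the regular locus of $X$ and $-K_X$ is $f$-ample, adjunction yields that $-K_F$ is ample. Moreover, since $f_{*}\sO_X=\sO_Y$, Remark~\ref{gFreg}(2) implies that $Y$ is itself globally $F$-regular.

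In case~(1), $F$ is a normal globally $F$-regular curve with $-K_F$ ample, hence $F\cong\mathbb{P}^1$, and $f$ is a generically smooth conic bundle. The base $Y$ is a globally $F$-regular surface, i.e.\ a log del Pezzo surface of Fano type, which in particular is rational. I would first verify that $Y$ is separably rationally connected using its Fano-type structure, and then lift a very free rational curve on $Y$ to a very free rational curve on $X$ by taking a section of $f$ over it (which exists because the general fibers $\mathbb{P}^1$ are SRC). In case~(2), $Y$ is a normal globally $F$-regular curve, so $Y\cong\mathbb{P}^1$, which is SRC; the general fiber is now a normal globally $F$-regular klt log del Pezzo surface, which is known to be separably rationally connected for $p>7$ (this is exactly where the characteristic hypothesis enters). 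I would then construct a very free rational curve through a general point of $X$ by combining a section of $f$ over $Y=\mathbb{P}^1$ with a very free curve inside a general fiber, and smoothing the resulting comb by deformation theory.

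The main obstacle will be the SRC-propagation step, since Graber-Harris-Starr does not apply directly in positive characteristic. The substitute is the positive-characteristic section existence theorem for fibrations whose general fibers are SRC (de~Jong-Starr), followed by a standard comb-smoothing argument to turn a section plus a fiberwise very free curve into a very free curve on $X$. The role of the assumption $p>7$ in case~(2) is precisely to ensure the log del Pezzo fibers are separably, and not merely freely, rationally connected, so that these tools apply; I expect to follow the strategy of \cite[Theorem~4.1]{GLP$^+$15} closely.
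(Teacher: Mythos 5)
Your outline matches the approach of the proof being cited: the paper does not reprove this statement but imports it from the proof of \cite[Theorem 4.1]{GLP$^+$15}, and that argument runs exactly as you describe --- general fibers are normal and globally $F$-regular by Theorem \ref{fiber}, the base is globally $F$-regular (hence $\mathbb{P}^1$ or a rational surface), and separable rational connectedness is propagated from base and fiber via sections over rational curves in the base (Tsen/de Jong--Starr) together with comb smoothing, with the hypothesis $p>7$ in case (2) used precisely to guarantee very free rational curves in the smooth locus of the log del Pezzo fibers. No substantive divergence to report.
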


We refer to \cite[IV 3.2 Definition]{Kol96} for the definition of separably rationally connected varieties.
Since the separably rationally connected property is preserved under birational maps, Theorem \ref{glsrc} states that if we start $K_{X}$-MMP from a smooth globally $F$-regular threefold and the MMP ends up with a Mori fiber space over a surface, or a curve and $p>7$, then $X$ is separably rationally connected. 
On the other hand, very little is known when the MMP ends up with a Fano variety. We refer to \cite{GLP$^+$15} for more details.

%%%%%%%%%%%%%%%%%%%%%%%%%%%%%%%%%%%%%%%%%%%%%%%%%%%%%%%%%%%%%%%%%%%%%%%%%%%%%%%
\subsection{Logarithmic Cartier operators}
In this subsection, we recall the logarithmic Cartier operator. Let $X$ be a smooth variety and $\Delta$ a simple normal crossing divisor on $X$.
The Frobenius push-forward of the logarithmic de Rham complex
\[
F_{*}\Omega^{\bullet}_X(\log \Delta) \colon F_{*}\sO_X \overset{F_{*}d}{\to} F_{*}\Omega^1_X(\log \Delta) \overset{F_{*}d}{\to} \cdots
\]
is a complex of $\sO_X$-module homomorphisms.
For all $i\geq0$, we define locally free $\sO_X$-modules as follows.
\[
\begin{array}{rl}
&B_X^i(\log \, \Delta)\coloneqq\Im(F_{*}d \colon F_{*}\Omega^{i-1}_X(\log \,\Delta) \to F_{*}\Omega^i_X(\log \, \Delta)),\\
&Z_X^i(\log \, \Delta)\coloneqq\Ker(F_{*}d \colon F_{*}\Omega^i_X(\log \, \Delta) \to F_{*}\Omega^{i+1}_X(\log \,\Delta)).\\
\end{array}
\]
By definition, we have the following exact sequence
\[
0 \to Z_X^i(\log \, \Delta) \to F_{*}\Omega^i_X(\log \, \Delta) \to B_X^{i+1}(\log \, \Delta) \to 0
\]
for $i\geq 0$. 
 
In particular, by taking $i=0$ in the above exact sequence, we have 
\[
0 \to \sO_X \to F_{*}\sO_X \overset{F_{*}d}{\to} B^1_X\to 0.
\]
We note that $B_X^1(\log \, \Delta)=B_X^1$.
We remark that the $F$-splitting (Definition \ref{defi-gFreg} (1)) is nothing but to the splitting of this exact sequence. 
Moreover, we have the exact sequence arising from the logarithmic Cartier isomorphism
\[
0 \to B_X^i(\log \, \Delta) \to  Z_X^i(\log \, \Delta) \overset{C}{\to} \Omega^i_X(\log \, \Delta) \to 0.
\]
We refer to \cite[Theorem 7.2]{Kat70} for more details.

%%%%%%%%%%%%%%%%%%%%%%%%%%%%%%%%%%%%%%%%%%%%%%%%%%%%%%%%%%%%%%%%%%%%%%%%%%%%%%%%
%%%%%%%%%%%%%%%%%%%%%%%%%%%%%%%%%%%%%%%%%%%%%%%%%%%%%%%%%%%%%%%%%%%%%%%%%%%%%%%%
%%%%%%%%%%%%%%%%%%%%%%%%%%%%%%%%%%%%%%%%%%%%%%%%%%%%%%%%%%%%%%%%%%%%%%%%%%%%%%%%
%%%%%%%%%%%%%%%%%%%%%%%%%%%%%%%%%%%%%%%%%%%%%%%%%%%%%%%%%%%%%%%%%%%%%%%%%%%%%%%%
\section{Bogomolov-Sommese type vanishing for several varieties}
In this section, we discuss a Bogomolov-Sommese type vanishing on varieties with special properties. 
By using these results and the classification of smooth projective surfaces, we show a Bogomolov-Sommese type vanishing for smooth projective $F$-split surfaces.

The following assertion states about a Bogomolov-Sommese type vanishing on separably uniruled varieties.
We refer to \cite[IV 1.1 Definition]{Kol96} for the definition of separably uniruled varieties.

\begin{prop}[\textup{\cite[Lemma 7]{Kol}}]\label{BVonSU}
Let $X$ be a smooth projective separably uniruled variety and $D$ a big Cartier divisor on $X$.
Then $H^0(X, \Omega^i_X\otimes \sO_X(-D))=0$ for all $i\geq 0$. 
\end{prop}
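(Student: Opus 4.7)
The plan is to argue by contradiction, using the fact that $\Omega^i_X$ is antinef along free rational curves while big divisors have positive intersection with suitable such curves.

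Suppose there is a nonzero section $s \in H^0(X, \Omega^i_X \otimes \sO_X(-D))$. It corresponds to a nonzero morphism $\sO_X(D) \to \Omega^i_X$, which is automatically injective because $\Omega^i_X$ is torsion-free on the smooth $X$. Let $Z(s) \subset X$ denote the zero locus of $s$, a proper closed subscheme. Since $D$ is big, I would write $D \sim_{\Q} A + E$ with $A$ an ample $\Q$-divisor and $E$ an effective $\Q$-divisor.

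Next I would invoke separable uniruledness: by definition $X$ is covered by a family of free rational curves. For any such curve $\iota \colon \PP^1 \to X$, the pullback $\iota^* T_X$ is a direct sum of line bundles of non-negative degrees, and hence $\iota^* \Omega^i_X \cong \bigwedge^i (\iota^* T_X)^{\vee}$ is a direct sum of line bundles of non-positive degrees. Since free rational curves sweep out a dense open subset of $X$, I can choose $C = \iota(\PP^1)$ avoiding $Z(s) \cup \Supp(E)$. Restriction of $s$ then produces a nonzero morphism $\sO_{\PP^1}(D \cdot C) \to \iota^* \Omega^i_X$, and projecting to a summand on which this composite is nonzero forces $D \cdot C \le 0$. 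On the other hand, $D \cdot C = A \cdot C + E \cdot C > 0$, since $A$ is ample and $E$ is effective with $C \not\subset \Supp(E)$. This contradiction finishes the proof.

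The only delicate point is ensuring that $s$ restricts to a nonzero section on the chosen curve; this is handled by keeping $C$ outside $Z(s)$, which is possible precisely because $Z(s)$ is a proper closed subscheme and free rational curves cover a dense open subset of $X$. The rest is routine manipulation of intersection numbers and degrees of line bundles on $\PP^1$. (The case $i=0$ is just the observation that $H^0(X, \sO_X(-D)) = 0$ for any big $D$, and for $i > \dim X$ the statement is vacuous, so the argument above covers the substantive range $1 \le i \le \dim X$.)
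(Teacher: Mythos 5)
Your argument is correct and is essentially the same restriction-to-rational-curves argument as the paper's source (Koll\'ar's Lemma 7) and the paper's own written-out proof of the analogous Proposition for separably rationally connected varieties: there the curves are very free so $\phi^{*}\Omega^i_X$ has strictly negative degrees, whereas here the curves are only free and the needed strict positivity comes instead from bigness via the decomposition $D\sim_{\Q}A+E$. The only cosmetic point is that one cannot choose $C$ literally disjoint from $\Supp(E)$; what you actually use, correctly, is $C\not\subset \Supp(E)$ and $C\not\subset Z(s)$, both guaranteed by taking the curve through a general point.
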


\begin{rem}\label{BVonSur}
Let $X$ be a smooth projective surface and $\Delta$ a simple normal crossing divisor. 
If $\kappa(X, K_X)=-\infty$ and $\Delta=0$, then the Bogomolov-Sommese vanishing (Theorem \ref{BV}) holds by Proposition \ref{BVonSU}.
However, there exists a pair $(S, E)$ consisting of a smooth rational surface $S$ and a disjoint union of smooth rational curves $E$ such that the logarithmic differential form $\Omega_S(\log \, E)$ contain a big invertible sheaf in every characteristic $p>0$ (see \cite[Lemma 8.3]{Lan} or \cite[Proposition 11.1]{Langer19}). 
On the other hand, we will see that if $X$ is $F$-split, then $\Omega_X(\log \, \Delta)$ dose not contain a nef and big invertible sheaf in Proposition \ref{ANvani}.
\end{rem}

Next, we show a Bogomolov-Sommese type vanishing on separably rationally connected varieties.
Let $X$ be a smooth projective variety of $\dim\,X=n$. We recall that a rational curve $\phi \colon \PP_k^1 \to X$ is called \textit{very free} if $\phi^{*}\Omega_X=\sO_{\PP_k^1}(-a_1)\oplus \cdots \oplus\sO_{\PP_k^1}(-a_n)$ 
for $a_1, \cdots, a_n\in \Z_{>0}$.

\begin{thm}\label{SRC}
Let $X$ be a smooth projective variety over an algebraically closed field. Then $X$ is separably rationally connected if and only if there is a very free rational curve through a general point of $X$.
\end{thm}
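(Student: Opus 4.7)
My plan is to prove both implications by analyzing the differential of the two-point evaluation morphism on the Hom scheme $\Hom(\mathbb{P}^1, X)$, following \cite[IV.3]{Kol96}. Given a family $u:M\times\mathbb{P}^1\to X$ of rational curves, the central object is
\[
u^{(2)}: M\times\mathbb{P}^1\times\mathbb{P}^1 \longrightarrow X\times X,\qquad (m,t_1,t_2)\longmapsto (u(m,t_1),u(m,t_2)).
\]
I would first identify, via the Kodaira--Spencer map, the differential of $u^{(2)}$ at $([\phi],t_1,t_2)$ with the simultaneous evaluation
\[
\mathrm{ev}_{t_1,t_2}: H^0(\mathbb{P}^1,\phi^{*}T_X) \longrightarrow T_{\phi(t_1)}X \oplus T_{\phi(t_2)}X,
\]
up to the contributions $d\phi(T_{t_i}\mathbb{P}^1)$, which already lie in the image of $\mathrm{ev}_{t_1,t_2}$ because $T_{\mathbb{P}^1} \hookrightarrow \phi^{*}T_X$ is globally generated on its image.

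For the ``only if'' direction, I would unwind \cite[IV.3.2]{Kol96}: separable rational connectedness furnishes such a family with $u^{(2)}$ smooth at a generic point $([\phi],t_1,t_2)$ with $t_1\neq t_2$, and varying $m$ lets me assume $\phi(t_1)$ is general in $X$. Decomposing $\phi^{*}T_X\cong \bigoplus_{i=1}^{n}\mathcal{O}_{\mathbb{P}^1}(a_i)$, I would argue factor-by-factor: surjectivity of $\mathrm{ev}_{t_1,t_2}$ decomposes into surjectivity of each $H^0(\mathcal{O}_{\mathbb{P}^1}(a_i))\to k\oplus k$, which is equivalent to $a_i\geq 1$. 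Hence $\phi$ is very free through a general point.

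For the ``if'' direction, I would start from a very free $\phi:\mathbb{P}^1\to X$ through a general point, so $H^1(\mathbb{P}^1,\phi^{*}T_X)=0$ by positivity; by \cite[Theorem I.2.16]{Kol96} this makes $\Hom(\mathbb{P}^1,X)$ smooth at $[\phi]$ with the Kodaira--Spencer map an isomorphism. Taking $M$ to be the component containing $[\phi]$ with its universal morphism $u$, the differential calculation above would show that $u^{(2)}$ has surjective differential at $([\phi],t_1,t_2)$ for any $t_1\neq t_2$ and is therefore smooth there; openness of the smooth locus, combined with the dominance of $u^{(2)}$ provided by the surjectivity of $\mathrm{ev}_{t_1,t_2}$, would propagate this to the generic point and yield separable rational connectedness.

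The hardest step will be the tangent-space bookkeeping---especially verifying carefully that the $d\phi$-contributions already sit inside the image of $\mathrm{ev}_{t_1,t_2}$, so that smoothness of $u^{(2)}$ genuinely forces positivity of every summand $a_i$---together with the positive-characteristic point that pointwise surjectivity of the differential on smooth varieties yields actual smoothness of the morphism. This last issue is automatic once $M$ is known to be smooth at $[\phi]$, which is the content of the $H^1$ vanishing in the ``if'' direction and a consequence of the smoothness of $u^{(2)}$ itself in the ``only if'' direction.
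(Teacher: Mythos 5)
The paper offers no argument for this statement: it simply cites Koll\'ar, \emph{Rational curves on algebraic varieties}, IV.3.7, and your sketch is a correct outline of exactly that proof --- the identification of $du^{(2)}$ with the two-point evaluation on $H^0(\mathbb{P}^1,\phi^*T_X)$ modulo the $d\phi$-terms (which indeed lie in the image of $\mathrm{ev}_{t_1,t_2}$ since $H^0(T_{\mathbb{P}^1})$ separates the two points), the factor-by-factor analysis of the splitting $\phi^*T_X\cong\bigoplus\mathcal{O}_{\mathbb{P}^1}(a_i)$, and the $H^1$-vanishing/smoothness of the Hom scheme for the converse. So your proposal takes essentially the same route as the source the paper defers to, and I see no gap beyond routine bookkeeping.
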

\begin{proof}
We refer to \cite[IV. Theorem 3.7]{Kol96} for the proof.
\end{proof}

The proof of the following proposition is essentially same as that of Proposition \ref{BVonSU}, but we include the proof for the convenience of the reader.  

\begin{prop}\label{BVonSRC}
Let $X$ be a smooth projective separably rationally connected variety
and $D$ a Cartier divisor on $X$. 
If $\kappa(X, D)\geq 0$,
then $H^0(X, \Omega_X^i \otimes \sO_X(-D))=0$ for all $i>0$.
\end{prop}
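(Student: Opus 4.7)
The plan is to mimic the argument for Proposition \ref{BVonSU}, replacing the use of a free rational curve with a very free rational curve supplied by Theorem \ref{SRC}, so that $\phi^{*}\Omega_X$ is a direct sum of strictly negative line bundles on $\mathbb{P}^1$. Suppose for contradiction that $H^{0}(X,\Omega_X^i\otimes\sO_X(-D))$ contains a nonzero section $s$. Since $\kappa(X,D)\ge 0$, choose $m>0$ and a nonzero section $t\in H^{0}(X,\sO_X(mD))$. Then $s^{\otimes m}\otimes t$ gives a global section of $(\Omega_X^i)^{\otimes m}$, and this section is nonzero because the complement of its zero locus is the intersection of the (open, dense) nonvanishing loci of $s$ and $t$.

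Next I would invoke Theorem \ref{SRC} to find a very free rational curve $\phi\colon \PP^{1}_{k}\to X$ whose image passes through a point at which $s^{\otimes m}\otimes t$ is nonzero; this is possible because very free curves cover a dense open subset of $X$, and this open set must meet the nonvanishing locus of $s^{\otimes m}\otimes t$ since $X$ is irreducible. Consequently $\phi^{*}(s^{\otimes m}\otimes t)\in H^{0}(\PP^{1},\phi^{*}(\Omega_X^i)^{\otimes m})$ is a nonzero section.

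On the other hand, by the definition of very freeness, $\phi^{*}\Omega_X\cong \bigoplus_{j=1}^{n}\sO_{\PP^1}(-a_j)$ with every $a_j>0$. Taking the $i$-th exterior power and the $m$-th tensor power, $\phi^{*}(\Omega_X^i)^{\otimes m}$ is a direct sum of line bundles of the form $\sO_{\PP^1}(-m(a_{j_1}+\cdots+a_{j_i}))$ with $j_1<\cdots<j_i$, each of strictly negative degree. Hence $H^{0}(\PP^{1},\phi^{*}(\Omega_X^i)^{\otimes m})=0$, contradicting the nonvanishing established above. This will finish the proof.

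The only potentially delicate step is guaranteeing that $s^{\otimes m}\otimes t$ remains nonzero and that we can arrange the very free curve to meet its nonvanishing locus; this is handled simply because both the very-free locus and the nonvanishing locus are dense open subsets of the irreducible variety $X$, so they intersect. Everything else is formal once Theorem \ref{SRC} is available.
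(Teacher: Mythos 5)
Your argument is correct and follows the paper's proof in all essentials: both restrict to a very free rational curve through a general point and derive a contradiction from the strict negativity of $\phi^{*}\Omega_X$. The only (harmless) difference is that you cancel the twist by $\sO_X(-D)$ by tensoring $s^{\otimes m}$ with a section $t$ of $\sO_X(mD)$ and pulling back a section of $(\Omega_X^i)^{\otimes m}$, whereas the paper pulls back $\Omega_X^i\otimes\sO_X(-D)$ directly, using that $\deg\phi^{*}\sO_X(D)\ge 0$ because $\Im\phi$ avoids the support of an effective divisor in $|mD|$.
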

\begin{proof}
By replacing $k$ with its algebraic closure, we may assume that $k$ is an algebraically closed field.
We take a Cartier divisor $D$ satisfying $\kappa(X, D)\geq 0$.
Conversely, we assume that there exists a nonzero section $0\neq s\in H^0(X, \Omega_X^i\otimes \sO_X(-D))$ for some $i>0$.
We fix $m\in \Z_{>0}$ such that $mD$ is linearly equivalent to an effective divisor.
We take a very free rational curve $\phi \colon \PP_k^1 \to X$ through a general point of $X$.
Then $\Im \phi$ is not contained $\Supp(mD)$ and hence we have $\phi^{*}\sO_X(D)=\sO_{\PP_k^1}(b)$ for some $b\geq0$.
By the definition of a very free rational curve, it follows that $\phi^{*}(\Omega_X^i\otimes \sO_X(-D))=\sO_{\PP_k^1}(-b_1)\oplus \cdots \oplus\sO_{\PP_k^1}(-b_n)$ for some $b_1, \cdots, b_n>0$.

On the other hand, since $\phi \colon \PP_k^1 \to X$ passes through a general point of $X$, it follows that $\Im \phi$ is not contained in the zero locus of $s$ and hence $s|_{\Im \phi}\neq 0$.

Now, we obtain 
\[
\begin{array}{rl}
0\neq s|_{\Im \phi}\in& H^0(\Im \phi, (\Omega_{X}^i\otimes \sO_X(-D))|_{\Im \phi})\\
\hookrightarrow&H^0(\Im \phi,((\Omega_{X}^i \otimes \sO_X(-D))\otimes \phi_{*}\sO_{\PP_k^1})\\
=&H^0(\PP_k^1,\phi^{*}((\Omega_{X}^i \otimes \sO_{X}(-D))\\
=&H^0(\PP_k^1,\sO_{\PP_k^1}(-b_1)\oplus \cdots \oplus\sO_{\PP_k^1}(-b_n))\\
=&0,
\end{array}
\]
a contradiction.
\end{proof}

\begin{rem}\label{Rem:BVonGFRS}
Let $X$ be a smooth globally $F$-regular surface. 
Then $X$ is rational by \cite[Proposition 3.5]{GLP$^+$15}. Therefore
if $\sO_X(D)\subset \Omega^i_X$ is an invertible subsheaf for some $i>0$, then $\kappa(X, D)=-\infty$ by Proposition \ref{BVonSRC}.
\end{rem}

\begin{lem}\label{ruled}
Let $f \colon X \to C$ be a minimal ruled surface and $g$ denote $\dim_{k} H^1(X, \sO_X)$.
Then $H^0(X, \Omega_X\otimes \sO_X(-D))=0$ for every Cartier divisor $D$ satisfying $\kappa(X, D)\geq\min\{g, 2\}$.
\end{lem}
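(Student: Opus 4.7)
The plan is to push the relative cotangent sequence down to the base curve $C$, reducing the problem to a cohomology computation on $C$.

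First, I will use the relative cotangent sequence
$$0 \to f^*\Omega_C \to \Omega_X \to \omega_{X/C} \to 0,$$
which is exact since $f$ is a smooth $\PP^1$-bundle. Let $F$ denote a general fiber; note that $D\cdot F\ge 0$, since $\kappa(X,D)\ge 0$ allows us to choose an effective member of $|mD|$ avoiding $F$, and $\omega_{X/C}|_F\cong \sO_{\PP^1}(-2)$. After tensoring the sequence with $\sO_X(-D)$ and applying $f_*$, the projection formula together with $f_*\sO_X=\sO_C$ gives an exact sequence whose rightmost term is $f_*(\omega_{X/C}\otimes \sO_X(-D))$. The restriction of $\omega_{X/C}\otimes \sO_X(-D)$ to the generic fiber is $\sO_{\PP^1}(-2-D\cdot F)$, which has no global sections; hence the torsion-free sheaf $f_*(\omega_{X/C}\otimes \sO_X(-D))$ on the smooth curve $C$ has zero generic rank and therefore vanishes. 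Taking $H^0$ then yields
$$H^0(X,\Omega_X\otimes \sO_X(-D)) \;\cong\; H^0\bigl(C,\Omega_C\otimes f_*\sO_X(-D)\bigr).$$

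Next, I will split into cases according to $D\cdot F$. If $D\cdot F>0$, then the same generic-fiber argument shows $f_*\sO_X(-D)=0$ and we are done. If $D\cdot F=0$, then using the splitting $\Pic(X)\cong f^*\Pic(C)\oplus \Z\cdot [\sigma]$ given by any section $\sigma$ of $f$, we can write $D\sim f^*N$ for some $N\in \Pic(C)$. The projection formula then gives $f_*\sO_X(-D)=\sO_C(-N)$ and $\kappa(X,D)=\kappa(C,N)$, so the question reduces to the vanishing $H^0(C,\Omega_C(-N))=0$, which holds precisely when $\deg N>2g-2$.

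The main (and only) subtlety is confirming that the hypothesis $\kappa(X,D)\ge \min\{g,2\}$ is exactly what is needed to force $\deg N>2g-2$ in this vertical case. When $g=0$, $\kappa(C,N)\ge 0$ forces $\deg N\ge 0>-2$. When $g=1$, $\kappa(C,N)\ge 1$ on an elliptic curve forces $\deg N\ge 1>0$. When $g\ge 2$, the inequality $\kappa(C,N)\ge 2$ is impossible since $\dim C=1$, so the case $D\cdot F=0$ does not arise. Everything else is routine relative cotangent-sequence bookkeeping.
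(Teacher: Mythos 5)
Your proof is correct, and it takes a genuinely more self-contained route than the paper's. The paper splits into three cases according to $g$: for $g=0$ it invokes Proposition \ref{BVonSRC} (very free curves on separably rationally connected varieties), for $g\geq 2$ it invokes Proposition \ref{BVonSU} (vanishing on separably uniruled varieties for big divisors), and only for $g=1$ does it argue directly, using the same sequence $0\to f^{*}\omega_C\to \Omega_X\to \omega_{X/C}\to 0$ but analyzing the composite $\sO_X(D)\to \omega_{X/C}$ by restriction to a general fiber (a nonzero composite forces $D\cdot F\leq -2$, a zero one forces $\sO_X(D)\hookrightarrow f^{*}\omega_C=\sO_X$). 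Your pushforward of the twisted sequence to $C$ is essentially the relative version of that fiber argument, but it treats all three cases uniformly and replaces the two external propositions by the elementary observation that $\kappa(C,N)\leq 1$; what you lose is brevity-by-citation, what you gain is independence from the rational-curve machinery. Two small points to tidy up: the splitting $\Pic(X)\cong f^{*}\Pic(C)\oplus \Z[\sigma]$ requires a section, so pass to $\overline{k}$ first (harmless, since $H^0$ and $\kappa$ commute with this base change; alternatively, $D\cdot F=0$ and $\kappa(X,D)\geq 0$ already force an effective member of $|mD|$ to be a sum of fibers, the fibers being irreducible by minimality), and ``precisely when $\deg N>2g-2$'' should read ``whenever'' --- only that implication is used.
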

\begin{proof}
If $g=0$ or $g>1$, then the assertion follows from Proposition \ref{BVonSRC} and Proposition \ref{BVonSU}, respectively.
We assume that $g=1$.
We take an injective homomorphism $s \colon \sO_X(D)\hookrightarrow \Omega_X$.
Then we have the following commutative diagram
\begin{equation*}
\xymatrix{ & & \sO_X(D) \ar@{.>}[ld] \ar[d]^{s} \ar[rd]^{t} &\\
                 0\ar[r] &f^{*}\omega_C=\sO_X \ar[r]   & \Omega^1_X \ar[r]  & \omega_X \ar[r] & 0.}
\end{equation*}
Let $F$ be a general fiber of $f$. By the generality of $F$, the restriction $t|_{F}\colon \sO_{F}(D)\hookrightarrow \omega_X|_F$ is injective.
Then we have $(D \cdot F)\leq (K_X \cdot F)=-2$ and hence $\kappa(X, D)=-\infty$ by the nefness of $F$.
Now, we assume that $t$ is zero. Then an injective homomorphism $\sO_X(D) \hookrightarrow \sO_X$ is induced by the above diagram and hence  $\kappa(X, D)=-\infty$ or $D=0$.
Therefore the assertion holds.
\end{proof}

%%%%%%%%%%%%%%%%%%%%%%%%%%%
%%%%%%%%%%%%%%%%%%%%%%%%%%%
Next, we discuss the case where $\kappa(X, K_X)=0$.
The following proposition is an immediate consequence of \cite[Corollary 3.3]{Lan15b}.

\begin{prop}\label{not-uniruled}
Let $X$ be a smooth projective variety of $\dim\,X=n$. 
Suppose that $p\geq (n-1)(n-2)$, $K_X\equiv0$, and $X$ is not uniruled.
If $\sO_X(D)\subset \Omega_X^{i}$ is an invertible subsheaf for some $i\geq 0$,
then $\kappa(X, D)\leq 0$.
\end{prop}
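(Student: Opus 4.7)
The plan is to reduce the statement to a slope-stability argument using Langer's generic semi-positivity result. First, I would dispense with the trivial case $i = 0$: an injection $\sO_X(D) \hookrightarrow \sO_X$ forces $D$ to be anti-effective, so $\kappa(X, D) \leq 0$. Henceforth assume $i \geq 1$.

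The key input is \cite[Corollary~3.3]{Lan15b}, which under the assumptions $p \geq (n-1)(n-2)$, $K_X \equiv 0$, and $X$ not uniruled, guarantees that $\Omega_X$ is \emph{strongly} $H$-semistable with respect to every ample divisor $H$ on $X$. Because strong (i.e.,~Frobenius-preserved) semistability is inherited by exterior powers, $\Omega_X^i$ is $H$-semistable as well. Its slope is computed directly: since $\det \Omega_X^i \cong \omega_X^{\otimes \binom{n-1}{i-1}}$ and $K_X \equiv 0$, one has $\mu_H(\Omega_X^i) = 0$.

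I would then exploit semistability to control any line subsheaf $\sO_X(D) \hookrightarrow \Omega_X^i$: the defining slope inequality yields $D \cdot H^{n-1} = \mu_H(\sO_X(D)) \leq \mu_H(\Omega_X^i) = 0$. The proof concludes by a standard pseudo-effectivity argument: if $\kappa(X, D) \geq 1$, then for some $m \in \Z_{>0}$ there is a nonzero effective divisor $E$ with $E \sim mD$; since $H$ is ample and $E$ is nonzero effective, $D \cdot H^{n-1} = \tfrac{1}{m}\, E \cdot H^{n-1} > 0$, contradicting the slope bound. Therefore $\kappa(X, D) \leq 0$.

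The real content of this argument is packed into the cited corollary, whose proof rests on Langer's extension of Miyaoka's generic semi-positivity theorem to positive characteristic; the deduction above is a routine slope-plus-pseudo-effectivity manipulation. The main point one must verify when invoking \cite[Corollary~3.3]{Lan15b} is that the semistability statement transfers from $\Omega_X$ to $\Omega_X^i$, which is precisely why \emph{strong} semistability (rather than mere semistability) is needed, and this requirement is ultimately what dictates the bound $p \geq (n-1)(n-2)$ in the hypothesis.
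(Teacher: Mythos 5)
Your proposal is correct and follows essentially the same route as the paper: both invoke \cite[Corollary~3.3]{Lan15b} for strong semistability of $\Omega_X$, pass to $\Omega_X^i$ via preservation of strong semistability under tensor/exterior operations (the paper cites Ramanan--Ramanathan \cite[Theorem 3.23]{RR84} for this step), and conclude from the slope inequality $D\cdot H^{n-1}\leq 0$. The only cosmetic difference is that you spell out the final pseudo-effectivity deduction, which the paper leaves implicit.
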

\begin{proof}
We may assume that $k$ is an algebraically closed field. 
By \cite[Corollary 3.3]{Lan15b}, it follows that $\Omega_X$ is strongly semistable with respect to any ample polarization $H$ and so is $\Omega^i_X$ for each $i\geq 0$ by \cite[Theorem 3.23]{RR84}. 
We refer to \cite{Lan15b} for the definition of the strongly semistability.
We take an injective homomorphism $\sO_X(D)\hookrightarrow \Omega_X^i$ for some $i\geq0$. 
By the definition of the semistability, we have $(D\cdot H^{n-1})\leq (-c_1(X)\cdot H^{n-1})/\rank\,\Omega_X^i=0$ and hence $\kappa(X, D)\leq 0$. 
Therefore we obtain the assertion.
\end{proof}

\begin{rem}\label{rem-not-uniruled}
A Calabi-Yau variety whose Artin-Mazur height is finite is not uniruled by \cite[Theorem 1.3]{Hir99}. 
We note that the proof of \cite[Theorem 1.3]{Hir99} works in any dimension.
\end{rem}

Now, we show a Bogomolov-Sommese type vanishing for smooth projective $F$-split surfaces.

\begin{thm}\label{BVonFSS}
Let $X$ be a smooth projective $F$-split surface.
If $\sO_X(D)\subset \Omega_X^{i}$ is an invertible subsheaf for some $i\geq 0$,
then $\kappa(X, D)\leq 0$.
\end{thm}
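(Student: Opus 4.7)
The plan is to reduce to a minimal model $X_0$ of $X$ and then apply the results of this section case-by-case according to the Enriques--Kodaira classification of smooth projective surfaces in positive characteristic.

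First, by successively contracting $(-1)$-curves we obtain a birational morphism $f \colon X \to X_0$, where $X_0$ is again smooth and, by Remark \ref{gFreg}(2), still $F$-split. Setting $D_0 := f_*D$, Lemma \ref{push} yields an inclusion $\sO_{X_0}(D_0) \hookrightarrow \Omega_{X_0}^{[i]}$ together with $\kappa(X,D) \leq \kappa(X_0,D_0)$, so it suffices to bound $\kappa(X_0,D_0)$. The case $i \geq 3$ is vacuous on a surface, and $i = 0$ is immediate since the subsheaf forces $-D_0$ to be effective. For $i = 2$ I would first establish that any $F$-split smooth projective variety satisfies $\kappa(X, K_X) \leq 0$: Grothendieck duality interprets the $F$-splitting as a nonzero section of $\omega_X^{1-p}$, so $-(p-1)K_X$ is linearly equivalent to an effective divisor, and combining this with $\kappa(X, K_X) \geq 1$ would force both $NK_X$ and $-NK_X$ to be linearly equivalent to effective divisors for some $N \gg 0$, hence $K_X$ torsion in $\Pic(X)$, contradicting $\kappa(X, K_X) \geq 1$. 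Since $\sO_{X_0}(D_0) \hookrightarrow \omega_{X_0}$ makes $K_{X_0}-D_0$ effective, we conclude $\kappa(X_0,D_0) \leq \kappa(X_0, K_{X_0}) \leq 0$.

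The main case is $i=1$, which I would split by the Kodaira dimension of $X_0$ (which must be $-\infty$ or $0$ by the previous paragraph). If $\kappa(X_0, K_{X_0}) = -\infty$, then $X_0$ is either rational ($\PP^2$ or a Hirzebruch surface) or a $\PP^1$-bundle $\pi \colon X_0 \to C$ over a smooth curve $C$ of positive genus. In the rational subcases $X_0$ is separably rationally connected, so Proposition \ref{BVonSRC} gives $\kappa(X_0,D_0) = -\infty$. In the irrational ruled subcase, the $F$-splitting of $X_0$ pushes forward along $\pi$ (using $\pi_*\sO_{X_0} = \sO_C$) to an $F$-splitting of $C$, forcing $g(C) \leq 1$ and hence $g(C) = 1$; Lemma \ref{ruled} with $g = 1$ then gives $\kappa(X_0,D_0) < 1$. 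If $\kappa(X_0, K_{X_0}) = 0$ then $K_{X_0} \equiv 0$, and whenever $X_0$ is not uniruled, Proposition \ref{not-uniruled} applies directly (its hypothesis $p \geq (n-1)(n-2)$ is vacuous for $n=2$).

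The main obstacle I foresee is the residual uniruled sub-case with $K_{X_0}\equiv 0$, namely the quasi-hyperelliptic or quasi-bielliptic surfaces that exist only in characteristics $2$ and $3$: one must either show that $F$-splitness excludes such pathological surfaces (for instance, by combining $F$-splitness with Remark \ref{rem-not-uniruled} to force finite Artin--Mazur height and hence non-uniruledness) or else handle them directly via their (quasi-)elliptic fibration.
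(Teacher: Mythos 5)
Your overall strategy coincides with the paper's: reduce to a minimal model via Lemma~\ref{push} and Remark~\ref{gFreg}(2), dispose of $i=0,2$ using the effectivity of $-(p-1)K_X$ coming from the splitting section, handle $\kappa(X_0,K_{X_0})=-\infty$ via Proposition~\ref{BVonSRC} and Lemma~\ref{ruled} (noting that $F$-splitness of the base curve forces $g\leq 1$), and handle $\kappa(X_0,K_{X_0})=0$ via Proposition~\ref{not-uniruled}. Your explicit derivation of $\kappa(X,K_X)\leq 0$ from $F$-splitness is a nice way to justify why only the cases $\kappa=-\infty$ and $\kappa=0$ occur, which the paper leaves implicit.

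However, there is a genuine gap in the $\kappa(X_0,K_{X_0})=0$ case, and you have mislocated where the difficulty sits. Proposition~\ref{not-uniruled} has ``not uniruled'' as a hypothesis, and this is \emph{not} automatic for surfaces with $K\equiv 0$ in positive characteristic: supersingular K3 surfaces are uniruled, so the K3 case already requires an argument, not only the quasi-hyperelliptic one. The paper's resolution is exactly the mechanism you gesture at for the wrong class: by van der Geer--Katsura, an $F$-split K3 surface has Artin--Mazur height $1$, hence finite height, hence is not uniruled by Remark~\ref{rem-not-uniruled}. For Enriques surfaces one cannot invoke the formal Brauer group directly; the paper passes to the K3 double cover, which exists and is \'etale only after showing (in characteristic $2$) that $F$-splitness forces $X$ to be a \emph{singular} Enriques surface --- the Frobenius action on $H^1(X,\sO_X)$ is bijective, excluding the supersingular type, and the splitting section of $\sO_X(-K_X)$ excludes the classical type --- and then transfers $F$-splitness (hence non-uniruledness) to the cover via \cite[Lemma 2.5.5]{AWZ}. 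For the (quasi-)hyperelliptic case your proposed fix via Artin--Mazur height is not the right tool; the paper instead uses Ejiri's result that the Albanese fibration of an $F$-split surface has normal general fibers, which rules out the quasi-elliptic (hence quasi-hyperelliptic) possibility and again yields an \'etale abelian cover. So your plan is correct in outline, but the entire content of the theorem in the $K\equiv 0$ case is the verification of non-uniruledness class by class, and that verification is missing.
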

\begin{proof}
When $i=0$, the assertion is obvious.
Since $X$ is $F$-split, 
there exists a section $\sigma \in  H^0(X, \sO_X((1-p)K_X))\cong\Hom_{\sO_X}(F_{*}\sO_X, \sO_X)$ which induces a splitting of the Frobenius map $\sO_X\to F_{*}\sO_X$, and in particular
the anti-canonical $-K_X$ is effective.
Then the assertion holds when $i=2$. 
We assume that $i=1$.
By Lemma \ref{push}, we may assume that $X$ is minimal.

\begin{itemize}
\item{The case where $\kappa(X, K_X)=-\infty$.}
If $X\cong \PP^2_k$, then the assertion follows from Proposition \ref{BVonSRC}. Thus we assume that $X$ has a ruled surface structure
$f \colon X \to C$.
Since $C$ is $F$-split by Remark \ref{gFreg} (2), it follows that $\dim\, H^1(C, \sO_C)=0$ or $1$ and the assertion follows from Lemma \ref{ruled}.

\item{The case where $\kappa(X, K_X)=0$.}
First, we assume that $X$ is a K3 surface. Then the Artin-Mazur height of $X$ is equal to one by \cite[5.1 Theorem]{GK} and hence we obtain the assertion by Proposition \ref{not-uniruled} and Remark \ref{rem-not-uniruled}.

Next, we assume that $X$ is an Enriques surface. 
We first assume that $p\neq 2$. Then there exists a finite \'etale morphism $f\colon Y\to X$ from a K3 surface $Y$. Since $f$ is \'etale, it follows from \cite[Lemma 2.5.5 (d)]{AWZ} that $Y$ is $F$-split and in particular $Y$ is not uniruled. The \'etaleness of $f$ also shows that $X$ is not uniruled and we obtain the assertion by Proposition \ref{not-uniruled}.
We next assume that $p=2$.
Since $X$ is $F$-split, the Frobenius action on $H^1(X, \sO_X)$ is bijective by \cite[Lemma 2.5.5 (a)]{AWZ} and hence $X$ is not supersingular. Moreover, since there exists a section $\sigma \in  H^0(X, \sO_X(-K_X))$ which induces a splitting of the Frobenius map, it follows that $K_X$ is not torsion and hence $X$ is not classical. Thus $X$ is a singular Enriques surface and hence there exists a finite \'etale morphism $f\colon Y\to X$ from a K3 surface $Y$ by \cite[Corollary in Section 3]{BM}. Now, the same argument as in the case where $p\neq 2$ shows the assertion.

If $X$ is an Abelian surface, then the assertion follows immediately from Proposition \ref{not-uniruled}.
Finally, we assume that $X$ is a (quasi-)hyperelliptic surface.
Since $X$ is $F$-split, a general fiber of the Albanese map is normal by \cite[Proposition 7.2]{Eji19}. Thus $X$ is a hyperelliptic surface and there exists a finite \'etale morphism $f\colon Y\to X$ from an Abelian surface $Y$. Then $X$ is not uniruled and we conclude the assertion by Proposition \ref{not-uniruled}.
\end{itemize}
\end{proof}
%%%%%%%%%%%%%%%%%%%%%%%%%%%%%%%%%%%%%%%%%%%%%%%%%%%%%%%%%%%%%%%%%%%%%%
%%%%%%%%%%%%%%%%%%%%%%%%%%%%%%%%%%%%%%%%%%%%%%%%%%%%%%%%%%%%%%%%%%%%%%
\section{Bogomolov-Sommese type vanishing for globally $F$-regular threefolds}\label{Sec:GFR}
%%%%%%%%%%%%%%%%%%%%%%%%%%%%%%%%%%%%%%%%%%%%%%%%%%%%%%%%%%%%%%%%%%%%%%
%%%%%%%%%%%%%%%%%%%%%%%%%%%%%%%%%%%%%%%%%%%%%%%%%%%%%%%%%%%%%%%%%%%%%%
In this section, we prove a Bogomolov-Sommese type vanishing on globally $F$-regular threefolds. 

\begin{defn}
Let $X$ be a variety and $\F$ a coherent sheaf on $X$.
We say that $\F$ satisfies \textit{Serre's condition }$S_n$ if $\mathrm{depth}_{\sO_{X,x}}(\F_{x})\geq \min\{n, \dim\,\sO_{X,x}\}$ holds for every (not necessary closed) point $x\in X$.
\end{defn}

\begin{lem}\label{amplevani}
Let $X$ be a projective variety and $A$ an ample Cartier divisor on $X$.
Let $\F$ be a coherent sheaf on $X$ satisfying Serre's condition $S_n$.
Then \[H^{i}(X, \F \otimes \sO_X(-mA))=0\]  for all $i<l\coloneqq \min\{n, \dim X\}$ and all sufficiently large $m$.
\end{lem}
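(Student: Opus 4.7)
Set $d := \dim X$ and note that since the conclusion only involves $l = \min(n, d)$, I may assume $n \leq d$ from the outset; the $S_n$ condition coincides with $S_d$ when $n \geq d$. The strategy is to reduce to a vanishing of $\mathcal{E}xt$-sheaves on an ambient projective space via Serre duality.

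First I replace $A$ by a sufficiently positive multiple $kA$ so that $|kA|$ defines a closed embedding $X \hookrightarrow \PP^N$, and push $\F$ forward to a coherent sheaf on $\PP^N$; the vanishing for $\sO_{\PP^N}(1)|_X$ gives the vanishing for $A$ after relabelling $m$. The standard fact used in this reduction is that, for any $x \in X$,
\[
\mathrm{depth}_{\sO_{\PP^N, x}}(\F_x) \;=\; \mathrm{depth}_{\sO_{X, x}}(\F_x),
\]
since $\F_x$ is annihilated by the ideal defining $X$.

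Second, because $\PP^N$ is smooth (hence Cohen--Macaulay) of dimension $N$, Serre duality gives
\[
H^i(\PP^N, \F(-m))^\vee \;\cong\; \mathrm{Ext}^{N-i}_{\PP^N}\bigl(\F,\, \omega_{\PP^N}(m)\bigr),
\]
and for $m \gg 0$ the local-to-global $\mathrm{Ext}$ spectral sequence together with ordinary Serre vanishing collapses this to $H^0\bigl(\PP^N, \mathcal{E}xt^{N-i}(\F, \omega_{\PP^N})(m)\bigr)$. It therefore suffices to prove $\mathcal{E}xt^{N-i}(\F, \omega_{\PP^N}) = 0$ for every $i < n$.

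Finally, this vanishing is checked at closed points. For $x \in X$ a closed point, local duality on the regular local ring $\sO_{\PP^N, x}$ identifies the stalk of $\mathcal{E}xt^{N-i}(\F, \omega_{\PP^N})$ at $x$ with the Matlis dual of $H^i_{\mathfrak{m}_x}(\F_x)$. The latter vanishes for $i < \mathrm{depth}(\F_x)$, and the $S_n$ hypothesis combined with $\dim \sO_{X,x} = d \geq n$ gives $\mathrm{depth}(\F_x) \geq n$, which is exactly what is needed. The main delicate point is tracking depth consistently across the embedding $X \hookrightarrow \PP^N$; once that is in hand, the spectral sequence and local duality steps are routine.
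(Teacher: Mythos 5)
Your proof is correct and follows essentially the same route as the paper's: embed $X$ in $\PP^N$, apply Serre duality and the local-to-global $\mathrm{Ext}$ spectral sequence with Serre vanishing to reduce to the vanishing $\mathcal{E}xt^{N-i}_{\PP^N}(\F,\omega_{\PP^N})=0$ for $i<l$, and deduce this from the depth hypothesis, which is insensitive to whether depth is computed over $\sO_{X,x}$ or $\sO_{\PP^N,x}$. The only (cosmetic) divergence is at the final local step, where you invoke local duality and the vanishing of $H^i_{\mathfrak{m}_x}(\F_x)$ below the depth, while the paper uses Auslander--Buchsbaum to bound $\mathrm{pd}_{\sO_{\PP^N,x}}(\F_x)\le N-l$; both are standard and give the same conclusion.
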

\begin{proof}
We may assume that $X$ is the closed subscheme of $\PP_{k}^N$ and $\sO_X(A)=\sO_X(1)$.
We fix a closed point $x \in X$. Since $\F$ satisfies Serre's condition $S_n$, it follows that 
\[
\mathrm{pd}_{\sO_{\PP_{k}^N,x}}(\F_{x})=N-\mathrm{depth}_{\sO_{\PP_{k}^N,x}}(\F_{x})=N-\mathrm{depth}_{\sO_{X,x}}(\F_{x})\leq N-l
\]
and hence $\mathcal{E}xt^j_{\PP_{k}^N}(\F, \,-)=0$ for $j>N-l$.
Now the Serre duality yields 
\begin{align*}
H^i(X, \F(-m))\cong&Ext^{N-i}(\F, \omega_{\PP_{k}^N}(m))& \\
                   \cong&H^0(\PP_{k}^N, \mathcal{E}xt^{N-i}_{\PP_{k}^N}(\F, \omega_{\PP_{k}^N}(m))) &\text{$m\gg0$} \\
                   =&0 &\text{$i<l$} 
\end{align*}
and hence we obtain the assertion.
\end{proof}

\begin{lem}\label{h^0vani}
Let $X$ be a normal projective variety and $D$ a $\Q$-Cartier Weil divisor on $X$ such that $\kappa(X, D)>0$.
Let $\F$ be a reflexive sheaf on $X$.
Then $H^0(X, (\F \otimes \sO_X(-p^eD))^{**})=0$ for all sufficiently large and divisible $e$.
\end{lem}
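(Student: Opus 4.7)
The strategy is to show that a hypothetical nonzero element of $H^0(X, (\F \otimes \sO_X(-p^e D))^{**})$ would, via multiplication with sections of $\sO_X(mD)$, produce arbitrarily many sections of one of finitely many fixed reflexive sheaves, contradicting finite-dimensionality.

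First I would use $\kappa(X, D) \geq 1$ to produce, for some $m_1 > 0$, sections $\sigma_1, \sigma_2 \in H^0(X, \sO_X(m_1 D))$ whose ratio $\alpha := \sigma_1/\sigma_2 \in k(X)$ is transcendental over $k$. By definition of the Iitaka dimension the image of $\phi_{m_1}$ has dimension $\geq 1$ for some $m_1$, so its function field has transcendence degree $\geq 1$ over $k$; since this field is generated over $k$ by the ratios $\sigma_i/\sigma_j$ of a basis of $H^0(X, \sO_X(m_1 D))$, at least one such ratio must be transcendental. Then the monomials $\sigma_1^a \sigma_2^b$ with $a+b = N$ yield $N+1$ linearly independent elements of $H^0(X, \sO_X(Nm_1 D))$, so
\[
h^0(X, \sO_X(Nm_1 D)) \geq N+1 \quad \text{for every } N \geq 0.
\]

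Next, suppose for contradiction that $0 \neq s \in H^0(X, (\F \otimes \sO_X(-p^e D))^{**})$. On the regular locus $X_{\reg}$ both $\sO_X(-p^e D)$ and $\F$ are locally free, and $s$ is generically nonzero by torsion-freeness of $\F$. Multiplication by $s$ therefore determines an injection
\[
H^0(X, \sO_X(Nm_1 D)) \hookrightarrow H^0(X, (\F \otimes \sO_X((Nm_1 - p^e) D))^{**}),
\]
after identifying global sections of reflexive sheaves on $X$ with their sections on $X_{\reg}$ via $\codim_X X_{\sg} \geq 2$. Choosing $N = \lceil p^e/m_1 \rceil$ so that $r := Nm_1 - p^e$ lies in the finite set $\{0, 1, \dots, m_1 - 1\}$, and combining with the bound from the previous paragraph, we obtain
\[
h^0(X, (\F \otimes \sO_X(rD))^{**}) \geq N + 1 \geq p^e/m_1.
\]
Setting $C := \max_{0 \leq r < m_1} h^0(X, (\F \otimes \sO_X(rD))^{**}) < \infty$ (each summand is finite since $X$ is projective and the sheaf is coherent), any $e$ with $p^e > Cm_1$ gives a contradiction, which is already stronger than the asserted vanishing for large and divisible $e$.

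The delicate point is Step 1: ensuring that the ratio $\sigma_1/\sigma_2$ is genuinely transcendental over $k$, not merely non-constant in $k(X)$, over a possibly non-algebraically-closed perfect field. This is where the geometric definition of $\kappa(X, D)$ via the dimension of $\overline{\phi_m(X)}$ is essential, rather than a mere nonvanishing of $h^0$. The other technicality, in Step 2, is that multiplication by $s$ must yield an honest injection on global sections of reflexive sheaves; this follows from torsion-freeness of $\F$ together with the identification $H^0(X, \G^{**}) = H^0(X_{\reg}, \G|_{X_{\reg}})$ for any reflexive $\G$.
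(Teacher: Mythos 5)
Your argument is correct, and it takes a genuinely different route from the paper's. Both proofs start from the same geometric input, namely a choice of $m_1$ with $\dim \overline{\phi_{m_1}(X)}\geq 1$, but they exploit it differently. The paper resolves the indeterminacy of the map attached to $|p^m(p^n-1)D|$ over a big open subset, pushes the relevant sheaf forward to the positive-dimensional image $Y$, and kills its sections by twisting with $-H_l$ for a growing multiple $H_l$ of an ample divisor on $Y$, via the Serre-type vanishing of Lemma \ref{amplevani} for sheaves satisfying $S_1$; the special multiples $p^m(p^n-1)$ and the $\Q$-Cartier hypothesis enter there, which is why the conclusion is phrased for sufficiently large \emph{and divisible} $e$. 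Your proof replaces all of this with an elementary section-counting argument: a transcendental ratio $\sigma_1/\sigma_2$ forces $h^0(X,\sO_X(Nm_1D))\geq N+1$, and multiplication by a hypothetical nonzero section $s$ injects this space into $H^0(X,(\F\otimes\sO_X(rD))^{**})$ for one of the finitely many residues $0\leq r<m_1$, contradicting finite-dimensionality once $p^e> Cm_1$. This avoids Lemma \ref{amplevani}, the resolution of indeterminacy and the projection formula entirely, does not use the $\Q$-Cartier hypothesis on $D$, and yields a stronger, effective conclusion (vanishing for every $e$ with $p^e$ beyond an explicit bound, with no divisibility condition). The two delicate points you single out -- that some ratio of sections is genuinely transcendental over $k$ because $\dim\overline{\phi_{m_1}(X)}\geq 1$, and that multiplication by a nonzero section of a torsion-free reflexive sheaf is injective on global sections after passing to a big open subset where everything in sight is reflexive -- are exactly the right ones, and your treatment of both is adequate.
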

\begin{proof}
Since $\kappa(X, D)>0$, there exist $m,n \in \Z_{>0}$ and a rational map $\phi\coloneqq\phi_{|p^m(p^n-1)D|} \colon X\dasharrow Y$ such that $Y$ is a projective variety with $\dim\, Y>0$. We fix such $m, n$.
Since $\F$ is reflexive, we can take an open subset $U$ with $\codim_X(X-U)\geq 2$ such that $\F$ is locally free on $U$ and $U\subset X_{\reg}$.
By taking a resolution of indeterminacy of $\phi|_U$, we have the following commutative diagram
\begin{equation*}
\xymatrix{ V \ar[d]_{f} \ar[rd]^{g} &\\
                 U \ar@{.>}[r]_{\phi|_U}   & Y.}
\end{equation*}
We note that $p^m(p^n-1)f^{*}D|_U-g^{*}H\geq 0$ for some ample Cartier divisor $H$ on $Y$ by the construction of $g$.
Then we have
\[
\begin{array}{rl}
H^0(X, (\F\otimes \sO_X(-p^{m+ln}D))^{**})=&H^0(U, (\F|_U\otimes \sO_U(-p^mD))\otimes \sO_U(-p^m(p^{ln}-1)D))\\
                                          =&H^0(V, f^{*}(\F|_U\otimes \sO_U(-p^mD))\otimes \sO_V(-p^m(p^{ln}-1)f^{*}D))\\
                                          \hookrightarrow&H^0(V, f^{*}(\F|_U\otimes \sO_U(-p^mD))\otimes \sO_V(-g^{*}H_l))\\
                                          =&H^0(Y, g_{*}f^{*}(\F|_U\otimes \sO_U(-p^mD))\otimes \sO_Y(-H_l))\\
\end{array}
\]
for all $l \in \Z_{>0}$, where $H_l\coloneqq(1+p^n+\cdots +p^{(l-1)n})H$.
Since $\F|_U$ and $\sO_U(-p^mD)$ are locally free, it follows that $g_{*}f^{*}(\F|_U\otimes \sO_U(-p^mD))$ is torsion-free.
Therefore $H^0(Y, g_{*}f^{*}(\F|_U\otimes \sO_U(-p^mD))\otimes \sO_Y(-H_l))=0$ for a sufficiently large integer $l$ by Lemma \ref{amplevani}.
\end{proof}

\begin{eg}
Let $X$ be a normal projective variety which lifts to the ring of Witt vectors of length two $W_2(k)$ with its Frobenius morphism (see \cite[Section 2]{BTLM} for more details).
Then there exists a splitting injective map $\Omega_X^{[i]}\hookrightarrow F_{*}\Omega_X^{[i]}$ by \cite[Theorem 2]{BTLM}. 
Let $D$ be a $\Q$-Cartier Weil divisor on $X$. 
If $\kappa(X, D)>0$, then it follows that
\[
H^0(X, (\Omega_X^{[i]}\otimes \sO_X(-D))^{**})\hookrightarrow H^0(X, (\Omega_X^{[i]}\otimes \sO_X(-p^eD))^{**})=0
\]
for a sufficiently large and divisible integer $e$ by Lemma \ref{h^0vani}. 
We remark that toric varieties lift to $W_2(k)$ with their Frobenius morphisms, but a stronger assertion than the above holds on them. We refer to  \cite[Theorem 2.22]{Fujino} for the detail.
\end{eg}

\begin{thm}\label{key}
Let $X$ be a projective globally $F$-regular variety and $\Delta$ a Weil divisor on $X$.
Suppose that $\dim X \geq 2$ and $\codim_X((X, \Delta)_{\nsnc})\geq 3$. 
Then $H^0(X, (\Omega_X^{[1]}(\log \, \Delta)\otimes \sO_X(-D))^{**})=0$ for every nef and big $\Q$-Cartier Weil divisor $D$ on $X$.
\end{thm}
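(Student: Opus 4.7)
Assume toward contradiction that $s \in H^0(X, (\Omega_X^{[1]}(\log \, \Delta) \otimes \sO_X(-D))^{**})$ is nonzero. Since $D$ is big we have $\kappa(X, D) = \dim X > 0$, so Lemma \ref{h^0vani} implies $H^0(X, (\Omega_X^{[1]}(\log \, \Delta) \otimes \sO_X(-p^e D))^{**}) = 0$ for all sufficiently large and divisible $e$. The plan is to inductively produce a nonzero section of this group for every $e \geq 0$, yielding the desired contradiction. Throughout I work on the open subset $U := (X, \Delta)_{\snc}$, whose complement in $X$ has codimension at least $3$ by hypothesis, and I use that $X$ is Cohen-Macaulay by Remark \ref{gFreg}(3); these two facts will let me interchange reflexive sheaves and their low cohomology on $X$ and on $U$.

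The induction step passes through the logarithmic Cartier exact sequence
\[
0 \to B_U^1 \to Z_U^1(\log \, \Delta) \overset{C}{\to} \Omega_U^1(\log \, \Delta) \to 0,
\]
where I use $B_U^1(\log \, \Delta) = B_U^1$ as stated in the preliminaries. Tensoring with $\sO_U(-p^e D)$ and taking cohomology, the obstruction to lifting a section $s_e$ of $\Omega_U^1(\log \, \Delta) \otimes \sO_U(-p^e D)$ through $C$ lies in $H^1(U, B_U^1 \otimes \sO_U(-p^e D))$. Since $X$ is $F$-split, the sequence $0 \to \sO_U \to F_*\sO_U \to B_U^1 \to 0$ splits, exhibiting $B_U^1$ as a direct summand of $F_*\sO_U$. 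The projection formula and the finiteness of $F$ then identify $H^1(U, B_U^1 \otimes \sO_U(-p^e D))$ with a direct summand of $H^1(U, \sO_U(-p^{e+1} D))$, which equals $H^1(X, \sO_X(-p^{e+1} D))$ by the codimension-$3$ assumption together with the Cohen-Macaulay property of divisorial sheaves on globally $F$-regular varieties. Kawamata-Viehweg vanishing for globally $F$-regular varieties applied to the nef and big $\Q$-Cartier divisor $p^{e+1} D$ then gives $H^1(X, \sO_X(-p^{e+1} D)) = 0$.

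With the obstruction killed, $s_e$ lifts to $\tilde{s}_e \in H^0(U, Z_U^1(\log \, \Delta) \otimes \sO_U(-p^e D))$. The inclusion $Z_U^1(\log \, \Delta) \hookrightarrow F_*\Omega_U^1(\log \, \Delta)$, combined with the projection formula
\[
F_*\Omega_U^1(\log \, \Delta) \otimes \sO_U(-p^e D) = F_*\bigl(\Omega_U^1(\log \, \Delta) \otimes \sO_U(-p^{e+1}D)\bigr),
\]
turns $\tilde{s}_e$ into a nonzero section of $\Omega_U^1(\log \, \Delta) \otimes \sO_U(-p^{e+1}D)$, which extends by reflexivity to $H^0(X, (\Omega_X^{[1]}(\log \, \Delta) \otimes \sO_X(-p^{e+1}D))^{**})$. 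Inducting from $s_0 := s$ produces the promised nonzero sections for every $e$, contradicting Lemma \ref{h^0vani}. The main obstacle I expect is the cohomological bookkeeping, particularly the invocation of Kawamata-Viehweg vanishing on the globally $F$-regular variety $X$ for the $\Q$-Cartier Weil divisor $p^{e+1}D$, and verifying that the pushforward and projection-formula manipulations remain compatible with the reflexive-hull operation, for which the codimension $\geq 3$ hypothesis combined with the Cohen-Macaulayness of $X$ is essential.
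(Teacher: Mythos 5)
Your argument is essentially the paper's proof run in the opposite direction: instead of choosing, via Lemma \ref{h^0vani}, the maximal $l$ with $H^0(X,(\Omega_X^{[1]}(\log\,\Delta)\otimes\sO_X(-p^{l}D))^{**})\neq 0$ and deriving a contradiction at that level, you ascend from $e$ to $e+1$; the two inductions are contrapositives of one another and use identical ingredients (the logarithmic Cartier exact sequence, the $F$-splitting exhibiting $B_U^1$ as a direct summand of $F_*\sO_U$, the inclusion $Z_U^1(\log\,\Delta)\subset F_*\Omega_U^1(\log\,\Delta)$, the local-cohomology comparison between $U$ and $X$, and Lemma \ref{h^0vani}). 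The one step where your write-up is genuinely thinner than the paper is the vanishing $H^1(X,\sO_X(-p^{e+1}D))=0$: Smith's theorem (\cite[Corollary 4.4]{Smi}) is stated for nef and big \emph{invertible} sheaves, while $p^{e+1}D$ is a priori only a $\Q$-Cartier Weil divisor, and there is no off-the-shelf Kawamata--Viehweg statement in that generality. The paper's Claim supplies the missing reduction: global $F$-regularity gives a split injection $\sO_X\hookrightarrow F^{m+ln}_*\sO_X(p^mD)$, which after restricting to $U$ and twisting by $\sO_U(-D)$ yields a split injection $H^1(U,\sO_U(-D))\hookrightarrow H^1(U,\sO_U(-D'_l))$ with $D'_l$ Cartier, so that Smith's vanishing applies; this also spares you the unproved assertion that the divisorial sheaf $\sO_X(-p^{e+1}D)$ is Cohen--Macaulay, since after the reduction only the Cohen--Macaulayness of $X$ itself is needed for the depth estimate along $(X,\Delta)_{\nsnc}$. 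You flagged exactly this point as the expected obstacle; with that reduction inserted, your proof coincides with the paper's.
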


\begin{proof}
First, we show the following claim.
\begin{cl}
$H^1(U, \sO_U(-D))=0$ for every nef and big $\Q$-Cartier Weil divisor $D$, where $U$ denotes $(X, \Delta)_{\snc}$.
\end{cl}
\begin{proof}[Proof of the Claim]
Let $D$ be a nef and big $\Q$-Cartier Weil divisor. We fix $m, n\in \Z_{>0}$ such that $D'\coloneqq p^m(p^n-1)D$ is Cartier.
The bigness of $D$ shows that $p^mD$ is linearly equivalent to an effective Weil divisor for all sufficiently large $m\gg0$.
By Remark \ref{gFreg} (1), there exists $l\gg 0$ such that 
\[
\sO_X\hookrightarrow F^{m+ln}_* \mathcal O_X(p^mD)
\]
splits.
By restricting to $U$ and tensoring $\sO_U(-D)$, we have a splitting injective map
\[
\begin{array}{rl}
\sO_U(-D )\hookrightarrow& F^{m+ln}_*\mathcal O_U(p^mD-p^{m+ln}D)\\
=& F^{m+ln}_* \sO_U(-D'_l),\\
\end{array}
\]
where $D'_l\coloneqq(1+p^n+\cdots +p^{(l-1)n})D'$.
By taking the cohomology, we have a splitting injection
\[
\begin{array}{rl}
H^1(U, \sO_U(-D))\hookrightarrow H^1(U, \sO_U(-D'_l)),
\end{array}
\]
and thus we may assume that $D$ is Cartier.
If $\dim \, X=2$, then $U=X$ by the assumption that $\codim_X((X, \Delta)_{\nsnc})\geq 3$, and the assertion of the claim follows from \cite[Corollary 4.4]{Smi}.
Now we assume that $n\coloneqq\dim \, X\geq 3$.
Since $X$ is globally $F$-regular, it follows that $X$ is Cohen-Macaulay by Remark \ref{gFreg} (3) and the line bundle $\sO_X(-D)$ satisfies Serre's condition $S_n$.
By the assumption $\codim_{X}(Z)\geq 3$ and by \cite[Proposition 1.2]{BH93}, we have $\mathcal{H}_Z^j(X, \sO_X(-D))=0$ for all $j<3$, 
where $Z$ denotes $(X, \Delta)_{\nsnc}$.
We consider the spectral sequence
\[
\begin{array}{rl}
E_2^{i, j}=H^i(X, \mathcal{H}_Z^j(X, \sO_X(-D)) \to H^{i+j}=H^{i+j}_{Z}(X, \sO_X(-D)).
\end{array}
\]
Since $E_2^{i, j}=H^i(X, \mathcal{H}_Z^j(X, \sO_X(-D))=0$ for all $j<3$,
we have
\[
H_{Z}^{i}(X, \sO(-D))=H^{i}=E_2^{i, 0}=H^i(X, \mathcal{H}_Z^0(X, \sO_X(-D))=0
\]
for all $i<3$.
By the local cohomology exact sequence, we have the exact sequence
\[
\begin{array}{rl}
H^1(X, \sO_X(-D)) \to H^1(U, \sO_X(-D) \to H_{Z}^{2}(X, \sO_X(-D))=0.
\end{array}
\]
Therefore it suffices to show that $H^1(X, \sO_X(-D))=0$ and this follows from  \cite[Corollary 4.4]{Smi}.
\end{proof}
%%%%%%%%%%%%%%%%%%%%%%%%%%%%%%%%%%
Now, we show the assertion of the theorem.
Conversely, we assume that
\[
H^0(X, (\Omega_X^{[1]}(\log \, \Delta) \otimes \sO_X(-D))^{**})=H^0(U, \Omega_U(\log \, \Delta)\otimes \sO_U(-D))\neq0.
\]
Then, by Lemma \ref{h^0vani}, there exists $l \in \Z_{\geq0}$ such that 
\[
\begin{array}{rl}
&H^0(X, (\Omega_X^{[1]}(\log \, \Delta) \otimes \sO_X(-p^{l}D))^{**})\neq 0,\\
&H^0(X, (\Omega_X^{[1]}(\log \, \Delta) \otimes \sO_X(-p^{l+1}D))^{**})= 0.
\end{array}
\]
We note that we can use the Cartier operator on $U=(X, \Delta)_{\snc}$.
Since $X$ is $F$-split, the exact sequence
\[
\begin{array}{rl}
0 \to \sO_U \to F_{*}\sO_U \to B_U^1 \to 0
\end{array}
\]
splits.
By the claim and the splitting of the above exact sequence, we have $H^1(U, B_U^1\otimes \sO_U(-p^{l}D))=0$ for every nef and big $\Q$-Cartier Weil divisor $D$.
Since $Z_U^1(\log \, \Delta)\subset F_{*}\Omega_U(\log \, \Delta)$, we have
\[
\begin{array}{rl}
H^0(U, Z_U^1(\log \, \Delta)\otimes \sO_U(-p^{l}D))\hookrightarrow &H^0(U, \Omega_U(\log \, \Delta)\otimes \sO_U(-p^{l+1}D))\\
                                                                 =&H^0(X, (\Omega_X^{[1]}(\log \, \Delta) \otimes \sO_X(-p^{l+1}D))^{**})\\
                                                                 =&0.\\
\end{array}
\]
Now, since $H^0(U, Z_U^1(\log \, \Delta)\otimes \sO_U(-p^{l}D))=H^1(U, B_U^1\otimes \sO_U(-p^{l}D))=0$, the exact sequence
\[
\begin{array}{rl}
0 \to B_U^1(\log \, \Delta)=B_U^1 \to  Z_U^1(\log \, \Delta) \to \Omega_U(\log \, \Delta) \to 0
\end{array}
\]
shows 
\[
\begin{array}{rl}
H^0(X, (\Omega_X^{[1]}(\log \, \Delta) \otimes \sO_X(-p^{l}D))^{**})=H^0(U, \Omega_U(\log \, \Delta)\otimes \sO_U(-p^{l}D))=0,
\end{array}
\]
a contradiction with the assumption of $l$.
\end{proof}

If $X$ is smooth in Theorem \ref{key}, then we can weaken the assumption that $X$ is globally $F$-regular as follows. 

\begin{prop}\label{ANvani}
Let $X$ be a smooth projective $F$-split variety of $\dim X\geq 2$ and $\Delta$ a simple normal crossing divisor on $X$.
Then $H^0(X, \Omega_X(\log \, \Delta) \otimes \sO_X(-D))=0$ for every nef and big Cartier divisor $D$ on $X$.
\end{prop}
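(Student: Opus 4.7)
The argument closely parallels the proof of Theorem \ref{key}, with significant simplifications afforded by the smoothness of $X$ and the snc hypothesis on $\Delta$: since $(X,\Delta)_{\nsnc}=\emptyset$ we may take $U=X$, so all reflexive operations collapse to operations on honest locally free sheaves, and moreover the preliminary reduction from $\Q$-Cartier to Cartier carried out inside the proof of Theorem \ref{key} is unnecessary because $D$ is already Cartier.

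Assume for contradiction $H^0(X,\Omega_X(\log\Delta)\otimes \sO_X(-D))\neq 0$. Since $D$ is big, $\kappa(X,D)=\dim X>0$, so Lemma \ref{h^0vani} applied to the locally free sheaf $\Omega_X(\log\Delta)$ furnishes a maximal integer $l\geq 0$ with $H^0(X,\Omega_X(\log\Delta)\otimes \sO_X(-p^l D))\neq 0$ and $H^0(X,\Omega_X(\log\Delta)\otimes \sO_X(-p^{l+1}D))=0$. The Cartier-operator short exact sequences $0\to Z_X^1(\log\Delta)\to F_{*}\Omega_X^1(\log\Delta)\to B_X^2(\log\Delta)\to 0$ and $0\to B_X^1\to Z_X^1(\log\Delta)\to \Omega_X(\log\Delta)\to 0$, tensored with $\sO_X(-p^l D)$ and combined with the projection formula, then yield first that $H^0(X,Z_X^1(\log\Delta)\otimes \sO_X(-p^l D))=0$, and then an injection
\[
H^0(X,\Omega_X(\log\Delta)\otimes \sO_X(-p^l D))\hookrightarrow H^1(X,B_X^1\otimes \sO_X(-p^l D)).
\]
The $F$-splitting of $X$ makes $0\to \sO_X\to F_{*}\sO_X\to B_X^1\to 0$ split, so $B_X^1$ is a direct summand of $F_{*}\sO_X$, whence $H^1(X,B_X^1\otimes \sO_X(-p^l D))$ is a direct summand of $H^1(X,F_{*}\sO_X\otimes \sO_X(-p^l D))\cong H^1(X,\sO_X(-p^{l+1}D))$. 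The desired contradiction thus reduces to showing $H^1(X,\sO_X(-p^{l+1}D))=0$.

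This last step is the crux: it is a Kodaira-type vanishing for the nef and big Cartier divisor $M\coloneqq p^{l+1}D$ on the smooth projective $F$-split $X$, strictly stronger than the standard Mehta--Ramanathan vanishing for ample divisors. My plan is to iterate the Frobenius splitting once more: tensoring the split inclusion $\sO_X\hookrightarrow F^e_{*}\sO_X$ with $\sO_X(-M)$ yields that $H^1(X,\sO_X(-M))$ is a direct summand of $H^1(X,\sO_X(-p^e M))$ for every $e\geq 0$, so it suffices to find some $e$ with $H^1(X,\sO_X(-p^e M))=0$. Bigness of $M$ gives, for large $e$, a linear equivalence $p^e M\sim A+E$ with $A$ a fixed very ample Cartier divisor on $X$ and $E$ effective Cartier; the Koszul-type sequence $0\to \sO_X(-p^e M)\to \sO_X(-A)\to \sO_E(-A)\to 0$ combined with Mehta--Ramanathan's $H^1(X,\sO_X(-A))=0$ then reduces the vanishing to $H^0(E,\sO_E(-A))=0$. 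Since $E$ is a proper Cohen--Macaulay scheme of pure dimension $\dim X-1\geq 1$ on which $A$ restricts to an ample divisor, this vanishing will follow by taking $A$ sufficiently positive so that $-A|_E$ has no global sections on $E$ (filtering $\sO_E$ by the ideal powers of its reduced components, so that each graded piece becomes sufficiently anti-ample on a reduced projective variety of positive dimension). The main obstacle in the plan is exactly this final $H^0(E,\sO_E(-A))=0$, where the possibly non-reduced nature of $E$ requires some care in the choice of $A$.
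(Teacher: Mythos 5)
Your reduction of the statement to the single vanishing $H^1(X,\sO_X(-p^{l+1}D))=0$ is correct and coincides with what the paper does: the choice of the maximal $l$ via Lemma \ref{h^0vani}, the two logarithmic Cartier-operator sequences, and the splitting of $0\to\sO_X\to F_*\sO_X\to B^1_X\to 0$ are exactly the ``argument after the claim'' in the proof of Theorem \ref{key}, which the paper's proof of this proposition invokes verbatim. The divergence is at the crux, namely the vanishing $H^1(X,\sO_X(-M))=0$ for $M$ nef and big. The paper handles this exactly as you begin to: the $F$-splitting gives a split injection $H^1(X,\sO_X(-M))\hookrightarrow H^1(X,\sO_X(-p^eM))$, and then it \emph{quotes} \cite[Proposition 2.24]{Lan09} for the statement that $H^1(X,\sO_X(-mM))=0$ for $m\gg 0$ when $M$ is nef and big and $\dim X\geq 2$. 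You instead try to prove this last vanishing by hand, and that is where the proposal has a genuine gap.

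The obstacle you flag at $H^0(E,\sO_E(-A))=0$ is not a removable technicality about non-reducedness: as written, your argument for $H^1(X,\sO_X(-p^eM))=0$ uses only the bigness of $M$, and that conclusion is false for big divisors that are not nef. For instance, on the blow-up $X$ of $\PP^2$ at a point (toric, hence $F$-split), with $H$ the pullback of a line and $E$ the exceptional curve, the divisor $M=2H+jE$ is big for every $j\geq 0$, and the exact sequence $0\to\sO_X(-2H-jE)\to\sO_X(-2H-(j-1)E)\to\sO_{\PP^1}(j-1)\to 0$ together with $H^0(X,\sO_X(-2H-(j-1)E))=0$ shows $h^1(X,\sO_X(-2H-jE))\geq j$; the same computation applies to every $p^eM$. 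So nefness must enter precisely at the step where you are stuck, and your decomposition $p^eM\sim A+E$ with $A$ \emph{fixed} and $E$ growing discards the nefness entirely. Even the sharper form of Kodaira's lemma available for nef and big $M$ (write $mM\sim A_m+E_0$ with $E_0$ fixed and $A_m=(m-m_0)M+A$ ample) does not immediately give $H^0(E_0,\sO_{E_0}(-A_m))=0$, because $M$ may be numerically trivial on components of $E_0$ while the graded pieces $\sO_{E_i}(-F)$ of the filtration of $\sO_{E_0}$ can be positive enough to absorb $-A$. The clean fix is to do what the paper does and cite \cite[Proposition 2.24]{Lan09} (or supply an actual proof of that Fujita--Langer type vanishing); everything else in your write-up then goes through.
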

\begin{proof}
We take a nef and big Cartier divisor $D$.
Since $X$ is $F$-split, we have a splitting injective map
\[
\begin{array}{rl}
H^1(X, \sO_X(-D))\hookrightarrow H^1(X, \sO_X(-p^eD)).
\end{array}
\]
By \cite[Proposition 2.24]{Lan09}, we have $H^1(X, \sO_X(-p^eD))=0$ for a sufficiently large integer $e$ and hence $H^1(X, \sO_X(-D))=0$.
Now the argument after the claim of Theorem \ref{key} shows the assertion.
\end{proof}

Globally $F$-regular surfaces have only $F$-regular singularities.
We note that $F$-regular singularities are klt and in particular the minimal resolutions are log resolutions. We refer to \cite{Hara2} for more details. 

Graf \cite{Gra} shows that a surface with $F$-regular singularities satisfies the extension theorem for the logarithmic differential form.

\begin{thm}[\textup{cf.~\cite[Theorem 1.2]{Gra}}]\label{RET}
Let $X$ be a normal surface with $F$-regular singularities and $\pi \colon Y \to X$ the minimal resolution with the reduce $\pi$-exceptional divisor $E$. 
Then $\pi_{*}\Omega_Y(\log \, E) \cong \Omega^{[1]}_X$.
\end{thm}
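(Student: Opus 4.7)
My plan is to obtain the isomorphism as the trivial-boundary specialization of Graf's logarithmic extension theorem \cite[Theorem 1.2]{Gra}.

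First, I would construct a natural injection $\pi_{*}\Omega_Y(\log \, E) \hookrightarrow \Omega_X^{[1]}$. Since $\pi$ is an isomorphism over $X_{\reg}$ and $E$ is disjoint from $\pi^{-1}(X_{\reg})$, both sheaves restrict on the regular locus to $\Omega_{X_{\reg}}$. Restricting a local section of $\pi_{*}\Omega_Y(\log \, E)$ to $X_{\reg}$ and then re-embedding via $\Omega_X^{[1]} = i_{*}\Omega_{X_{\reg}}$ produces a canonical morphism, which is injective because $\pi_{*}\Omega_Y(\log \, E)$ is torsion-free (the pushforward under a proper morphism of a locally free sheaf on a smooth variety) and $X_{\reg}$ is dense in $X$.

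Next, I would check surjectivity locally at each singular point $x \in X_{\sg}$, so one may work on a small affine neighborhood of $x$. The essential input is Graf's extension theorem, which asserts that for a surface $X$ with $F$-regular singularities and any log resolution $\pi \colon Y \to X$ with reduced exceptional divisor $E$, the natural map $\pi_{*}\Omega^p_Y(\log \, E) \to \Omega_X^{[p]}$ is an isomorphism for every $p \ge 0$. As the paragraph preceding the theorem records, $F$-regular surface singularities are klt, so the minimal resolution is itself a log resolution and Graf's hypotheses are met. Applying his theorem with $p = 1$ and empty boundary yields the needed surjectivity.

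The only point requiring real attention is to match the precise form of Graf's statement to the present setup, namely the specialization to empty boundary and to the minimal (rather than an arbitrary) log resolution. Since Graf's formulation holds for every log resolution with any reduced boundary, this specialization is immediate, which is exactly why the statement here is marked as \emph{cf.}~\cite[Theorem 1.2]{Gra} rather than as a new assertion.
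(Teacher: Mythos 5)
Your construction of the natural injection $\pi_{*}\Omega_Y(\log\, E)\hookrightarrow \Omega_X^{[1]}$ is fine and is the routine half of the argument. The gap is in the surjectivity step: you quote \cite[Theorem 1.2]{Gra} as if it said ``for a surface with $F$-regular singularities and any log resolution, extension holds in every characteristic,'' but that is not Graf's statement. His extension theorem is formulated for (log canonical, resp.\ klt) surface singularities and is proved only under a lower bound on the characteristic; indeed such a bound is necessary for klt singularities in general, which is exactly why the present theorem is attributed as ``cf.''\ rather than as a direct citation. Since Theorem \ref{RET} is asserted for $F$-regular surface singularities in \emph{every} characteristic $p>0$ (and is used elsewhere in the paper without any restriction on $p$), the black-box appeal to Graf's theorem does not cover the small-characteristic cases, and your ``the specialization is immediate'' sentence is precisely where the missing work lives.

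What the paper actually does is: first reduce to an algebraically closed base field via \cite[Proposition 7.4]{Gra}; then invoke Hara's classification \cite[Theorem 1.1]{Hara2} of two-dimensional $F$-regular singularities in terms of the dual graph of the minimal resolution --- either a chain, or a star-shaped graph of type $(2,2,d)$ with $p\neq 2$, of type $(2,3,3)$ or $(2,3,4)$ with $p>3$, or of type $(2,3,5)$ with $p>5$; and finally apply the individual computational steps (7.9.5), (7.9.6), (7.9.7) from the \emph{proof} of \cite[Theorem 1.2]{Gra} to each class, each step being valid exactly under the characteristic constraint that Hara's classification supplies. To repair your argument you would need to replace the direct citation by this classification-plus-case-analysis (or some other device that handles $p\in\{2,3,5\}$); note also that working on the minimal resolution, rather than an arbitrary log resolution, is essential for matching the dual graphs to Hara's list.
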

\begin{proof}
We may assume that $k$ is an algebraically closed field by \cite[Proposition 7.4]{Gra}.
By \cite[Theorem 1.1]{Hara2}, the dual graph of $\pi$ is one of the following.
\begin{enumerate}
\item {The graphs of the singularity is a chain.}
\item {The graphs of the singularity is star-shaped and either} 
\begin{enumerate}
\item{of type $(2, 2, d)$, $d\geq2$, and $p\neq2$,}
\item{of type $(2, 3, 3)$ or $(2, 3, 4)$, and $p>3$,}
\item{of type $(2, 3, 5)$ and $p>5$.}
\end{enumerate}
\end{enumerate}
By applying \cite[7.B Proof of Theorem 1.2 (7.9.5)]{Gra} (resp.~[\textit{ibid}, (7.9.6)], [\textit{ibid}, (7.9.7)]) to (1) (resp.~(2)(a), (2)(b) and (c)), we obtain the assertion. 
 \end{proof}

\begin{cor}\label{BVonGFSS}
Let $X$ be a normal projective $F$-split surface with $F$-regular singularities.
Then $H^0(X, \Omega^{[i]}_X\otimes \sO_X(-D))=0$ for every $i\geq0$ and every nef and big Cartier divisor $D$ on $X$.
\end{cor}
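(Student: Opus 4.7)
The proof proceeds by cases on $i$; the sheaf $\Omega^{[i]}_X$ vanishes for $i \geq 3$, so only $i = 0, 1, 2$ need be considered. For $i = 0$, a section of $\sO_X(-D)$ would make $-D$ linearly equivalent to an effective divisor and hence, by nefness of $D$, give $D^2 \leq 0$, contradicting bigness. For $i = 2$, $\Omega^{[2]}_X = \omega_X$, and the $F$-splitting produces a nonzero section of $\sO_X((1-p)K_X)$, so $-K_X$ is $\Q$-effective; writing $K_X \sim_{\Q} -E_1$ with $E_1 \geq 0$, a nonzero section of $\omega_X(-D)$ produces $K_X - D \sim E_2$ with $E_2 \geq 0$, whence $D \sim_{\Q} -(E_1 + E_2)$ and the same nef-bigness contradiction arises.

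For the main case $i = 1$, let $\pi \colon Y \to X$ be the minimal resolution and $E$ its reduced exceptional divisor, which is simple normal crossing by Hara's classification recalled in the proof of Theorem~\ref{RET}. Combining Theorem~\ref{RET} with the projection formula reduces the problem to showing
\[
H^0(Y, \Omega_Y(\log E) \otimes \sO_Y(-\pi^*D)) = 0,
\]
where $\pi^*D$ is a nef and big Cartier divisor on the smooth surface $Y$. I will imitate the iterated Cartier operator argument of Theorem~\ref{key}: assume for contradiction the group is nonzero and use Lemma~\ref{h^0vani} to choose the largest $l \geq 0$ for which $H^0(Y, \Omega_Y(\log E) \otimes \sO_Y(-p^l \pi^*D))$ is nonzero. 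The inclusion $Z^1_Y(\log E) \subset F_*\Omega_Y(\log E)$ together with the maximality of $l$ gives $H^0(Y, Z^1_Y(\log E) \otimes \sO_Y(-p^l \pi^*D)) = 0$, and the logarithmic Cartier exact sequence $0 \to B^1_Y \to Z^1_Y(\log E) \to \Omega_Y(\log E) \to 0$ then reduces the task to proving
\[
H^1(Y, B^1_Y \otimes \sO_Y(-p^l \pi^*D)) = 0.
\]

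Here the $F$-regularity of $X$ enters. Since $F$-regular surface singularities are rational, $R^q \pi_* \sO_Y = 0$ for $q > 0$. Pushing forward the sequence $0 \to \sO_Y \to F_* \sO_Y \to B^1_Y \to 0$ (and using that Frobenius commutes with $\pi_*$) yields $\pi_* B^1_Y = B^1_X$ and $R^1 \pi_* B^1_Y = 0$; the projection formula and Leray then give $H^1(Y, B^1_Y \otimes \sO_Y(-p^l \pi^*D)) \cong H^1(X, B^1_X \otimes \sO_X(-p^l D))$. The $F$-splitting of $X$ makes the sequence $0 \to \sO_X \to F_* \sO_X \to B^1_X \to 0$ split, so $H^1(X, B^1_X \otimes \sO_X(-p^l D))$ is a direct summand of $H^1(X, F_* \sO_X(-p^{l+1} D)) = H^1(X, \sO_X(-p^{l+1} D))$. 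Iterating the $F$-splitting injection $\sO_X \hookrightarrow F_*\sO_X$ embeds this group into $H^1(X, \sO_X(-p^N D))$ for every $N \geq l+1$; another application of Leray identifies the latter with $H^1(Y, \sO_Y(-p^N \pi^*D))$, which vanishes for $N \gg 0$ by Lang's theorem \cite[Proposition 2.24]{Lan09} applied to the nef and big Cartier divisor $\pi^*D$ on the smooth variety $Y$. The resulting chain of injections and isomorphisms provides the desired contradiction.

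The main obstacle is that the minimal resolution $Y$ is not in general $F$-split, so Proposition~\ref{ANvani} cannot be applied directly to $(Y, E)$. The technical heart of the proof is therefore to transport the $F$-splitting from $X$ to a cohomological vanishing on $Y$, which is accomplished through the rational-singularities identifications $\pi_* B^1_Y = B^1_X$ and $R^1 \pi_* B^1_Y = 0$.
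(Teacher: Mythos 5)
Your proposal is correct, and in the main case $i=1$ it takes a genuinely different route from the paper. Both arguments dispose of $i=0,2$ via the bigness of $D$ and the ($\Q$-)effectivity of $-K_X$, and both use Theorem~\ref{RET} together with the projection formula to move the $i=1$ problem to the snc pair $(Y,E)$ on the minimal resolution. At that point the paper asserts that the $F$-splitting of $X$ lifts to $Y$ (citing \cite[1.3.13 Lemma]{fbook}) and then simply invokes Proposition~\ref{ANvani} for the smooth $F$-split pair $(Y,E)$. You instead keep the splitting on $X$: you rerun the Cartier-operator induction of Theorem~\ref{key} on $Y$, and at the one step where a splitting is needed --- the vanishing of $H^1(Y, B^1_Y\otimes\sO_Y(-p^l\pi^*D))$ --- you use the rationality of $F$-regular surface singularities to get $\pi_*B^1_Y\cong F_*\sO_X/\sO_X$ and $R^1\pi_*B^1_Y=0$, so that Leray and the projection formula transport the question to $H^1(X, (F_*\sO_X/\sO_X)\otimes\sO_X(-p^lD))$, where the $F$-splitting of $X$ itself produces the split embedding into $H^1(X,\sO_X(-p^{N}D))$; a final Leray step carries this back up to $H^1(Y,\sO_Y(-p^N\pi^*D))$, where \cite[Proposition 2.24]{Lan09} applies on the smooth variety $Y$. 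The trade-off: the paper's route is shorter but rests on the claim that the minimal resolution of an $F$-split surface with $F$-regular singularities is again $F$-split (its stated justification via crepancy of $\pi$ is loose, since the minimal resolution of a klt surface singularity is crepant only in the canonical case, although the conclusion itself is true because $(1-p)K_Y\geq (1-p)\pi^*K_X$ for the minimal resolution); your route is slightly longer but needs only the rationality of the singularities, which is immediate from $F$-regularity, and never requires $Y$ to be $F$-split. Both are valid, and yours is arguably the more robust.
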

\begin{proof}
When $i=0$, we obtain the assertion by the bigness of $D$.
Since $X$ is $F$-split, it follows that $-K_X$ is effective and hence the assertion holds when $i=2$.
Now, we assume that $i=1$.
Conversely, we assume that there exists an injective homomorphism $\sO_X(D) \hookrightarrow \Omega_X^{[1]}$ for some nef and big Cartier divisor $D$ on $X$.
Let $\pi \colon Y \to X$ be the minimal resolution with the reduced $\pi$-exceptional divisor $E$. Since $\pi$ is crepant, it follows that $Y$ is $F$-split by \cite[1.3.13 Lemma]{fbook}.
Now, by Theorem \ref{RET}, we have an injective homomorphism $\sO_Y(\pi^*D) \to \pi^{*}\Omega_X^{[1]}\cong\pi^{*}\pi_{*}\Omega_Y(\log \, E) \to \Omega_Y(\log E)$, a contradiction with Proposition \ref{ANvani}.
\end{proof}

\begin{lem}\label{BVonGFR2}
Let $f \colon X \to Y$ be a projective surjective morphism of normal varieties satisfying $f_{*}\sO_X=\sO_Y$.
Suppose that a general fiber $F$ of $f$ is globally $F$-regular and $\dim\,F=1$ or $2$.
In addition, assume that $\codim_X(X_{\sg})\geq3$ when $\dim\,F=2$.
Let $D$ be an $f$-nef and $f$-big $\Q$-Cartier Weil divisor on $X$.
Then $f_{*}(\Omega_X^{[i]} \otimes \sO_X(-D))^{**}=0$ for all $i\geq0$. 
\end{lem}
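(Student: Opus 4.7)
The plan is to reduce the vanishing of $f_*(\Omega_X^{[i]} \otimes \sO_X(-D))^{**}$ to a fiberwise vanishing on a general fiber of $f$, which can then be handled by Corollary \ref{BVonGFSS} (in the two-dimensional fiber case) or by a direct computation on $\PP^1$ (in the one-dimensional fiber case). Set $\mathcal{G} \coloneqq (\Omega_X^{[i]} \otimes \sO_X(-D))^{**}$, a reflexive sheaf on $X$. Since $f_*\sO_X = \sO_Y$ and $X$ is integral, $f_*\mathcal{G}$ is torsion-free on $Y$: multiplication by any nonzero $t \in \sO_Y$ pulls back to multiplication by $f^*t \neq 0$ on $\mathcal{G}$, which is injective by torsion-freeness of $\mathcal{G}$. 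Hence to prove $f_*\mathcal{G} = 0$ it suffices to exhibit a dense open $U \subset Y$ with $f_*\mathcal{G}|_U = 0$.

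We may assume $k$ is algebraically closed. By generic flatness, Theorem \ref{fiber}, and upper semicontinuity of $f$-nefness and $f$-bigness, choose $U \subset Y_{\reg}$ such that $f$ is flat over $U$, $\mathcal{G}|_{f^{-1}(U)}$ is flat over $U$, every closed fiber $F_y$ ($y \in U$) is normal and globally $F$-regular, and $D|_{F_y}$ is nef, big, and $\Q$-Cartier. The codimension hypothesis on $X_{\sg}$ gives $\codim_X X_{\sg} > \dim F$ (namely $\geq 2$ for $\dim F = 1$ from normality of $X$, and $\geq 3$ for $\dim F = 2$ by assumption), so $\dim f(X_{\sg}) < \dim Y$ and we may further assume $f^{-1}(U) \subset X_{\reg}$; then $\mathcal{G}|_{f^{-1}(U)} = \Omega_X^i \otimes \sO_X(-D)$, with no reflexive hull needed. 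Cohomology and base change then gives, for closed $y \in U$,
\[
(f_*\mathcal{G})_y \otimes k(y) \cong H^0\bigl(F_y, (\Omega_X^i \otimes \sO_X(-D))|_{F_y}\bigr),
\]
so it remains to show this vanishes for general $y$. Granting that $F_y$ is smooth (automatic for $\dim F = 1$), the relative cotangent sequence $0 \to f^*\Omega_Y|_{F_y} \to \Omega_X|_{F_y} \to \Omega_{F_y} \to 0$ induces a Koszul filtration on $\Omega_X^i|_{F_y}$ with successive quotients $\Omega_{F_y}^{i-j} \otimes \sO_{F_y}^{\oplus \binom{\dim Y}{j}}$ for $j = 0, \ldots, i$. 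By the associated long exact sequences it then suffices to show $H^0(F_y, \Omega_{F_y}^{i-j} \otimes \sO_{F_y}(-D|_{F_y})) = 0$ for each $j$. For $\dim F = 1$, $F_y \cong \PP^1_k$ and this is a direct degree computation. For $\dim F = 2$ this follows from Corollary \ref{BVonGFSS}, whose Cartier hypothesis can be weakened to $\Q$-Cartier by combining Lemma \ref{h^0vani} with the logarithmic Cartier-operator argument from the proof of Theorem \ref{key}.

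The main obstacle is confirming smoothness of a general fiber in the $\dim F = 2$ case, since in characteristic $p > 0$ a general fiber of a proper dominant morphism of normal varieties need not be smooth even when it is normal. The codimension-$3$ assumption places $F_y$ inside $X_{\reg}$, but upgrading normality to smoothness requires an additional generic-smoothness input (e.g.\ separability of the extension $k(X)/k(Y)$). An alternative, which avoids the smoothness question entirely, is to apply the $\Q$-Cartier extension of Corollary \ref{BVonGFSS} directly to the normal (possibly singular) globally $F$-regular surface $F_y$ and to phrase the cotangent-sequence filtration on the smooth locus of $F_y$, propagating the vanishing through reflexive hulls, using that $F_y$ has only $F$-regular (hence isolated) singularities.
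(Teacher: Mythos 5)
Your proposal is essentially the paper's argument: reduce to a vanishing statement on a general fiber, filter $\bigwedge^i\Omega_X|_F$ via the (co)normal sequence using that $f^*\Omega_Y|_F$ (equivalently $I_F/I_F^2$) is trivial over a regular point of $Y$, and conclude by a degree count on $\PP^1$ or by Corollary \ref{BVonGFSS} on the surface fiber. Two remarks. First, the ``main obstacle'' you flag --- smoothness of the general fiber, which indeed can fail in characteristic $p$ --- is not an obstacle in the paper's proof and need not be one in yours: your ``alternative'' \emph{is} the intended argument. One never needs $F$ smooth; one restricts the conormal sequence to $F_{\reg}$, uses that $F$ is normal and globally $F$-regular (Theorem \ref{fiber}), hence has $F$-regular (isolated) singularities, and applies Corollary \ref{BVonGFSS}, which is stated precisely for such singular surfaces. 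So you should promote the alternative to the main line. Second, no $\Q$-Cartier extension of Corollary \ref{BVonGFSS} is needed: the hypothesis $\codim_X(X_{\sg})\geq 3$ (resp.\ normality when $\dim F=1$) forces the general fiber to lie entirely in $X_{\reg}$, where the Weil divisor $D$ is genuinely Cartier, so $D|_F$ is a nef and big Cartier divisor --- this is exactly what that codimension hypothesis is for, and invoking an unproved strengthening of Corollary \ref{BVonGFSS} would otherwise leave a gap (its proof goes through the minimal resolution and Theorem \ref{RET}, and extending it to $\Q$-Cartier divisors is not immediate). Finally, your reduction to the fibers via torsion-freeness of $f_*\mathcal{G}$ plus generic flatness and base change is correct but heavier than necessary; taking $Y$ affine and restricting a putative nonzero section of $\mathcal{G}$ to a general closed fiber, where it remains nonzero and hence injective as a map from a rank-one torsion-free sheaf, does the same job.
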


\begin{proof}
If $\dim \, Y=0$, then $X$ is a smooth rational curve or a smooth rational surface, and the assertion follows from Proposition \ref{BVonSRC}.
Thus we assume that $\dim \, Y>0$. 
We may assume that $Y$ is affine.
Conversely, we assume that there exists an injective homomorphism $s\colon \sO_X(D)\hookrightarrow \Omega_X^{[i]}$ for some $i\geq 0$. 
Since the closed point $y\coloneqq f(F)$ is contained in $Y_{\reg}$, we have $I_{F}/I_{F}^2=f^{*}(\mathrm{m}_{y}/\mathrm{m}_{y}^2)=\sO_F^{\oplus\dim\,Y}$, where $I_F$ is the ideal sheaf of $F$.
Now by the conormal exact sequence, we have
\[
0 \rightarrow \sO_{F_{\reg}}^{\oplus\dim\,Y}\rightarrow \Omega_X|_{F_{\reg}}\rightarrow \Omega_{F_{\reg}} \rightarrow 0.
\]
By the generality of $F$ and the assumption of the lemma, the fiber $F$ is contained in $X_{\reg}$ and the restriction $s|_F \colon \sO_F(D|_F)\hookrightarrow \Omega^{[i]}_X|_{F}=\bigwedge^{i}\Omega_X|_{F}$ is injective. Then we obtain an injective homomorphism $\sO_F(D|_F)\hookrightarrow \Omega^{[j]}_{F}\otimes \bigwedge^{i-j}\sO_F^{\oplus\dim\,Y}$
by \cite[Lemma 3.14]{Gra15}.
In particular, we have $\sO_F(D|_F) \hookrightarrow \Omega^{[j]}_{F}$ for some $j\geq 0$. 
We first assume that $\dim\,F=1$. Then $F\cong \PP^1_k$ and $D|_F$ is a nef and big Cartier divisor. This is a contradiction.
We next assume that $\dim\,F=2$.
Then $D|_F$ is a nef and big Cartier divisor by the assumption that $\codim_X(X_{\sg})\geq 3$. Now we can derive a contradiction by Corollary \ref{BVonGFSS}. 
\end{proof}

Let us recall the definition of Mori fiber spaces.

\begin{defn}\label{MFS}
Let $f \colon X \to Y$ be a projective surjective morphism of normal varieties.
We say $f \colon X \to Y$ is a \emph{Mori fiber space} if
\begin{itemize}
    \item $-K_X$ is $f$-ample,
    \item $X$ is $\Q$-factorial,
    \item $f_{*}\sO_X=\sO_Y$ and $\dim \, X>\dim \, Y$,
    \item the relative Picard rank $\rho(X/Y)=1$.
\end{itemize}
\end{defn}

Now, we prove a Bogomolov-Sommese type vanishing for globally $F$-regular threefolds. 

\begin{thm}\label{main}
Let $X$ be a smooth projective globally $F$-regular threefold and $\sO_X(D) \subset \Omega_X$ an invertible subsheaf. 
If $p>3$, then $\kappa(X, D) \leq 1$.
Furthermore, if $p>7$, then $\kappa(X, D) \leq 0$.
\end{thm}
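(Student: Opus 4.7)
The plan is to argue by contradiction, assuming $\kappa(X, D) \geq 2$ (for the bound $\kappa \leq 1$, which requires $p > 3$) or $\kappa(X, D) \geq 1$ (for the bound $\kappa \leq 0$, which requires $p > 7$), and to reduce via the MMP to the case of a Mori fiber space. Since $X$ is globally $F$-regular, $-K_X$ is big, so $K_X$ is not pseudo-effective, and running the $K_X$-MMP (available on threefolds in characteristic $p > 3$ by \cite{Birker, BW17, HW19, HX15}) produces a Mori fiber space $f \colon X' \to Y$. Each MMP step is a birational contraction, so by Remark \ref{gFreg} (2) the output $X'$ remains globally $F$-regular; moreover $X'$ is $\Q$-factorial and terminal by construction. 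Writing $D'$ for the push-forward of $D$, Lemma \ref{push} yields $\kappa(X', D') \geq \kappa(X, D)$ and transports the given injection to a nonzero element $s \in H^0(X', (\Omega_{X'}^{[1]} \otimes \sO_{X'}(-D'))^{**})$. I will split the argument according to $\dim Y$.

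If $\dim Y = 0$, then $\rho(X') = 1$ and $-K_{X'}$ is ample. Since $\kappa(X', D') \geq 1$ in either version, $D'$ is $\Q$-effective with numerical class a positive multiple of the ample generator, and hence $D'$ is nef and big. Terminality of the threefold $X'$ forces $X'_{\sg}$ to be a finite set of points, so $(X', 0)_{\nsnc}$ has codimension at least $3$ in $X'$, and Theorem \ref{key} gives $H^0(X', (\Omega_{X'}^{[1]} \otimes \sO_{X'}(-D'))^{**}) = 0$, contradicting $s \neq 0$.

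If $\dim Y = 2$, or if $\dim Y = 1$ with $p > 7$, then Theorem \ref{glsrc} shows that $X'$ is separably rationally connected. Taking a resolution $\pi \colon W \to X'$, the smooth variety $W$ is again separably rationally connected, so by Theorem \ref{SRC} there is a very free rational curve through a general point of $W$; since $X'_{\sg}$ is zero-dimensional by terminality and the moduli of very free curves is positive-dimensional, such a curve may be chosen so that its image avoids $\Ex(\pi)$, and hence lies in $W \setminus \Ex(\pi) \cong X'_{\reg}$. Denote the resulting map by $\phi \colon \PP^1 \to X'$. As in the proof of Proposition \ref{BVonSRC}, restricting $s$ along $\phi$ gives a nonzero injection $\sO_{\PP^1}(\deg \phi^*D') \hookrightarrow \phi^*\Omega_{X'} = \bigoplus_{i=1}^3 \sO_{\PP^1}(-a_i)$ with all $a_i > 0$, forcing $\deg \phi^* D' < 0$; but $D'$ is $\Q$-effective and $\phi$ passes through a general point, so $\deg \phi^* D' \geq 0$, a contradiction.

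The remaining case is $\dim Y = 1$ with $3 < p \leq 7$, relevant only for the bound $\kappa(X, D) \leq 1$, so $\kappa(X', D') \geq 2$. Since $Y$ is a normal projective globally $F$-regular curve, $Y \cong \PP^1$. Fix an $f$-ample $\Q$-Cartier divisor $H$ and, using $\rho(X'/Y) = 1$, write $D' \equiv aH + f^*E$ in $N^1(X')_{\Q}$ for some $a \in \Q$ and $\Q$-divisor $E$ on $Y$. If $a > 0$, then $D'$ is $f$-ample, hence $f$-nef and $f$-big; a general fiber of $f$ is a normal globally $F$-regular surface by Theorem \ref{fiber}, and $\codim_{X'}(X'_{\sg}) \geq 3$ by terminality, so Lemma \ref{BVonGFR2} gives $f_*((\Omega_{X'}^{[1]} \otimes \sO_{X'}(-D'))^{**}) = 0$, contradicting $s \neq 0$. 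If $a = 0$, then $D'|_F$ is numerically trivial on a general (del Pezzo) fiber $F$; since $\Pic^0(F) = 0$, one has $D'|_F \sim 0$ and $\kappa(F, D'|_F) = 0$, so the Iitaka inequality $\kappa(X', D') \leq \kappa(F, D'|_F) + \dim Y$ yields $\kappa(X', D') \leq 1$, contradicting $\kappa(X', D') \geq 2$. If $a < 0$, then $D'|_F$ is anti-ample on a general fiber, so any section of a multiple of $D'$ would vanish along each general fiber and hence identically, giving $\kappa(X', D') = -\infty$, again a contradiction. The main obstacle in this plan is the separably-rationally-connected case in the third paragraph: one must realize the very free curve inside the smooth locus of the singular variety $X'$ in order to keep the restriction of $s$ well-defined with the expected negative degree bound, and this depends crucially on the zero-dimensionality of $X'_{\sg}$ given by terminality together with the deformability of very free curves within their positive-dimensional moduli.
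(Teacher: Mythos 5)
Your overall strategy (run the $K_X$-MMP to a Mori fiber space $g\colon X'\to Y$, transport $D$ and the section via Lemma \ref{push}, and split on $\dim Y$) is exactly the paper's, and two of your three cases are correct and essentially match the paper: for $\dim Y=0$ you use Theorem \ref{key} together with the isolatedness of terminal threefold singularities, and for $\dim Y=1$ with $3<p\le 7$ you restrict to a general globally $F$-regular fiber via Theorem \ref{fiber} and Lemma \ref{BVonGFR2}; your trichotomy in the coefficient $a$ is just a more explicit version of the paper's observation that $\kappa(X',D')>1=\kappa(X',G)$ forces $a>0$, and working numerically rather than up to $\Q$-linear equivalence is harmless here.

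The gap is in the separably rationally connected cases ($\dim Y=2$, and $\dim Y=1$ with $p>7$), precisely at the point you yourself flag. You insist on running the very-free-curve argument on the singular model $X'$, and to do so you claim that a very free curve on a resolution $\pi\colon W\to X'$ may be chosen so that its image avoids $\Ex(\pi)$, on the grounds that the moduli of very free curves is positive-dimensional. This justification fails: $\Ex(\pi)$ is a \emph{divisor} in $W$ (even though it maps to the finite set $X'_{\sg}$), and the intersection number of a curve with each exceptional component is constant in any connected family of deformations. So if the class of your very free curve meets some exceptional component positively --- which you have not ruled out --- then \emph{every} deformation of it meets $\Ex(\pi)$, and its image in $X'$ always passes through a singular point; deforming within the family cannot help. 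Making your route work would require producing a very free curve class with zero intersection against all $\pi$-exceptional divisors, which needs a genuine argument. The fix is immediate and is what the paper does: separable rational connectedness is preserved under birational maps, so the original \emph{smooth} $X$ is itself separably rationally connected, and Proposition \ref{BVonSRC} applies directly to $X$, $D$ and the original nonzero section of $\Omega_X\otimes\sO_X(-D)$ (using $\kappa(X,D)\ge 0$), with no resolution and no avoidance problem.
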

\begin{rem}
In the above theorem, we need the assumption that $p>3$ only for running $K_X$-MMP.
\end{rem}
\begin{proof}
Let us prove the first assertion of the theorem. We assume that $p>3$ and $\kappa(X, D)>1$. Let us show that $H^0(X, \Omega_{X}\otimes \sO_X(-D))=0$.
Since $X$ is globally $F$-regular, the anti-canonical divisor $-K_X$ is big by \cite[Corollary 4.5]{SS10}. Then by running $K_X$-MMP, we obtain a birational contraction $f\colon X\dasharrow X'$ and a Mori fiber space $g\colon X'\to Y$ by \cite[Theorem 1.2]{HW19}.
By Remark \ref{gFreg} (2), $X'$ is a $\Q$-factorial terminal projective globally $F$-regular threefold.
By Lemma \ref{push}, it suffices to show that
$H^0(X', (\Omega^{[1]}_{X'}\otimes \sO_{X'}(-D'))^{**})=0$. Moreover, we have $\kappa(X',D')\geq \kappa(X,D)> 1$.

First, we assume that $\dim\, Y=0$.
In this case, the divisor $D'$ is ample since $\kappa(X', D')> 1$ and $\rho(X')=1$.
Since three-dimensional terminal singularities are isolated by \cite[Corollary 2.13]{Kol13}, we obtain $H^0(X', (\Omega^{[1]}_{X'}\otimes \sO_{X'}(-D'))^{**})=0$ by Theorem \ref{key}.

Next, we assume that $\dim\, Y=1$.
Let $G$ be a general fiber of $g$. 
Since $-K_{X'}$ and $G$ form the basis of $N^1(X')\otimes_{\Z}{\Q}$, we can denote $D'\equiv a(-K_{X'})+bG$ for some $a,b \in \Q$, where $N^1(X')$ is the quotient of $\Pic(X')$ by its subgroup consisting of all isomorphism classes numerically equivalent to zero.
We denote by $\Pic^{0}(X')$ the subgroup of $\Pic(X')$ consisting of all isomorphism classes algebraically equivalent to zero and by $\NS(X')$ the quotient of $\Pic(X')$ by $\Pic^{0}(X')$. 
Since $X'$ is globally $F$-regular, it follows from \cite[Corollary 4.3]{Smi} that $H^1(X', \sO_{X'})=0$.
Then by \cite[Theorem 9.5.11]{FAG}, we obtain $\Pic^{0}(X')=0$ and hence $\Pic(X')=\NS(X')$. 
Since the kernel of the natural map $\NS(X')\twoheadrightarrow N^1(X')$ is torsion by \cite[Corollary 1.4.38]{Lazarsfeld}, we obtain $\Pic(X')\otimes_{\Z}\Q=\NS(X')\otimes_{\Z}\Q=N^1(X')\otimes_{\Z}\Q$.
In particular, $D'$ is $\Q$-linearly equivalent to $a(-K_{X'})+bG$.
Since $\kappa(X', D')>1=\kappa(X', G)$, it follows that $a>0$ and hence $D'|_G$ is ample. 
Now $G$ is a globally $F$-regular surface by Theorem \ref{fiber} and hence we obtain $H^0(X', (\Omega^{[1]}_{X'}\otimes \sO_{X'}(-D'))^{**})=0$ by Lemma \ref{BVonGFR2}.

Finally, we assume that $\dim\, Y=2$.
In this case, $X'$ is separably rationally connected by Theorem \ref{glsrc} (1) and hence so is $X$.
Then we obtain $H^0(X, \Omega_{X}\otimes \sO_X(-D))=0$ by Proposition \ref{BVonSRC}.

Now, we show the latter assertion. We assume that $p>7$ and $\kappa(X,D)>0$.
We take $X',Y$ as above.
When $\dim\,Y=0$ or $2$, we obtain the assertion by the essentially same argument as above.
When $\dim\,Y=1$, Theorem \ref{glsrc} (2) shows that $X'$ is separably rationally connected and hence so is $X$. Therefore we obtain $H^0(X, \Omega_{X}\otimes \sO_X(-D))=0$ by Proposition \ref{BVonSRC}.
\end{proof}

\begin{rem}\label{remmain}
Let $X$ be a terminal projective globally $F$-regular threefold. Suppose that $p>3$ and $\sO_X(D) \subset \Omega^{[1]}_X$ is a Weil divisorial subsheaf.
Then an argument similar to Theorem \ref{main} shows that $\kappa(X, D) \leq 2$ as follows.

By taking a small $\Q$-factorialization and running $K_{X}$-MMP, the assertion is reduced to a Mori fiber space $g \colon X' \to Y$. 
Let $D'$ is the push-forward of $D$ to $X'$.
When $\dim\,Y=0$ or $1$, we obtain the assertion by the proof of the first assertion of Theorem \ref{main}.
On the other hand, when $\dim\,Y=2$, we need a different argument from Theorem \ref{main} since Proposition \ref{BVonSRC} can not be applied to singular varieties. 
In this case, since $D'$ is big and $\rho(X'/Y)=1$, it follows that $D'|_G$ is ample and the assertion follows from Lemma \ref{BVonGFR2}.
\end{rem}

%%%%%%%%%%%%%%%%%%%%%%%%%%%%%%%%%%%%%%%%%%%%%%%%%%%%%%%%%%%%%%%%%%%%%%
%%%%%%%%%%%%%%%%%%%%%%%%%%%%%%%%%%%%%%%%%%%%%%%%%%%%%%%%%%%%%%%%%%%%%%

\section*{Acknowledgement}
The author wishes to express his gratitude to his supervisor Professor Shunsuke Takagi for his encouragement, valuable advice, and suggestions. 
He would like to thank Professor Adrian Langer for pointing out some references, Professor Hiromu Tanaka, Kenta Sato, and Shou Yoshikawa for helpful comments and conversations. This work was supported by JSPS KAKENHI 19J21085.

%%%%%%%%%%%%%%%%%%%%%%%%%%%%%%%%%%%%%%%%%%%%%%%%%%%%%%%%%%%%%%%%%%%%%%
%%%%%%%%%%%%%%%%%%%%%%%%%%%%%%%%%%%%%%%%%%%%%%%%%%%%%%%%%%%%%%%%%%%%%%
%%%%%%%%%%%%%%%%%%%%%%%%%%%%%%%%%%%%%%%%%%%%%%%%%%%%%%%%%%%%%%%%%%%%%%
%%%%%%%%%%%%%%%%%%%%%%%%%%%%%%%%%%%%%%%%%%%%%%%%%%%%%%%%%%%%%%%%%%%%%%

%\bibliography{hoge.bib}
%\bibliographystyle{abbrv}
%\bibliographystyle{alpha}

\end{document}